\newtheorem{theorem}{Theorem}[section]
\newtheorem{lemma}[theorem]{Lemma}
\newtheorem{proposition}[theorem]{Proposition}
\newtheorem{corollary}[theorem]{Corollary}
\theoremstyle{definition}
\newtheorem{definition}[theorem]{Definition}
\newtheorem{remark}[theorem]{Remark}
\newtheorem{example}[theorem]{Example}
\begin{document}

\begin{center}
\huge{Bifurcations  in the family of billiards associated with the curvature flow}
\vspace{0.5cm}

\large{J.G. Damasceno,  M.J. Dias Carneiro and C. Salazar }\\

\end{center}
\vspace{0.5cm}

\textbf{{Abstract}}
We describe some dynamical properties of one parameter families of billiards on convex curves (ovals) which are deformed by the curvature (curve-shortening) flow. We obtain the bifurcations of the period two orbits and some special non-Birkhoff orbits, the normal periodic orbits. We prove the destruction of non-convex caustics of the ellipse by deforming it through the curvature flow.

\footnote{\textbf{ Classification}: 37A05;37E40;37J45

\textbf{Key words}: Billiards, curvature flow, Melnikov Method, periodic points
 }

\section{ Introduction}

This work aims to describe some dynamical properties of the family of billiard maps associated with a one-parameter family of curves satisfying the curvature flow or the curve shortening flow.

This flow was studied in a series of papers by M. Gage and R. Hamilton \cite{G1},\cite{GH}, and M. Grayson \cite{Grayson} that were published in the late 1980s. These works deal mainly with the long-time behavior of regular closed plane curves which deform in the direction of the curvature vectors. The curves generally shrink to a point, but they become increasingly ''round".  This geometrical feature is proved by considering a {\it normalized flow} such that the enclosed area is constant equals to $\pi$.  In this case, the curvature converges uniformly to one. Grayson proved that any simple closed curve becomes convex and then evolves as a family of convex curves approximating the circle of radius one.

Therefore it is natural to ask what type of changes occur in the billiard maps associated with this family of curves. In other words, changes in the dynamical properties can be observed as the curve evolves by the curvature flow.

Billard maps generally may have very complicated behavior, since it may contain chaotic regions, the  Birkhoff instability regions. On the other hand, billiard on circles are trivial in the sense that the phase space is foliated by invariant circles and the map restricted to each circle is a rotation. Hence as the curves
evolves the dynamics should tend to a less  (although not monotonically) complicated behavior.

The main result of this paper is analogous to  Theorem 1.1 of \cite{D-DC-RR} for homotopically trivial invariant curves around the elliptic period 2 orbit of the ellipse. The caustics corresponding to such invariant curves are  co-focal hyperboles:

\begin{theorem}
The normalized curvature flow breaks all resonant hyperbolic caustics of the billiard map on the ellipse.
\end{theorem}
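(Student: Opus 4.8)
The plan is to treat the billiard on the deformed ellipse as a perturbation of the integrable ellipse billiard and to detect the destruction of each resonant hyperbolic caustic by a subharmonic Melnikov argument. I would write the billiard map as an exact twist map with generating function $h(s,s') = -|\gamma(s') - \gamma(s)|$, so that $q$-periodic orbits are the critical configurations of the length functional $\mathcal{L}(s_0,\dots,s_{q-1}) = \sum_{k} |\gamma(s_{k+1}) - \gamma(s_k)|$. On the unperturbed ellipse each resonant hyperbolic caustic (one whose tangent polygons close up after $q$ reflections, with rotation number $p/q$ in the coordinate on the homotopically trivial invariant curve) carries a whole one-parameter family of such orbits, parametrized by a phase $\phi$ running over a circle; by integrability (the string/Graves property) the unperturbed length is constant along this family. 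I then introduce the normalized curvature-flow deformation $\gamma_\epsilon = \gamma_0 + \epsilon V + O(\epsilon^2)$, where the velocity along the ellipse is normal, $V = (\kappa - c)\,N$, with $\kappa$ the curvature and $c$ the area-preserving normalization constant computed at $\epsilon = 0$. The standard criterion for breaking a resonant caustic then reduces the theorem to showing that the subharmonic Melnikov potential is non-constant in $\phi$.

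First I would compute that potential by the first-variation (envelope) formula. Because each unperturbed orbit is a critical point of $\mathcal{L}$, the implicit variation of the reflection points contributes nothing to first order, and only the explicit motion of the boundary survives:
$$ M(\phi) = \left.\frac{d}{d\epsilon}\right|_{\epsilon=0} \mathcal{L}\big(\mathrm{orbit}(\phi)\big) = \sum_{k=0}^{q-1} \big\langle \nabla_{P_k}\mathcal{L},\, V(s_k)\big\rangle = 2\sum_{k=0}^{q-1} \sin\theta_k\,\big(\kappa(s_k) - c\big), $$
where $P_k = \gamma_0(s_k(\phi))$ are the vertices, $\theta_k$ is the grazing angle at $P_k$, and I have used $\nabla_{P_k}\mathcal{L} = 2\sin\theta_k\,N(s_k)$. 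The next step is to pass to the elliptic-coordinate parametrization of the ellipse and of its confocal hyperbolic caustics: in this chart the phase $\phi$ advances by the fixed rotation $p/q$ at each reflection, so that $M(\phi)$ becomes the Birkhoff sum of a single real-analytic function $g$ over the orbit of a rational rotation, $M(\phi) = \sum_{k=0}^{q-1} g(\phi + kp/q)$. Both $\sin\theta_k$ and $\kappa(s_k) - c$ become explicit expressions in Jacobi elliptic functions, with $\sin\theta_k$ controlled by the conserved Joachimsthal-type integral attached to the caustic.

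The hard part will be to prove that $M(\phi)$ is non-constant for every rational rotation number $p/q$ at once, uniformly. The function $M$ is the projection of $g$ onto the Fourier modes that are multiples of $q$, so non-constancy amounts to showing that at least one such nonzero resonant harmonic of $g$ survives. I would complexify the elliptic parametrization, locate the poles of $g$ on the complex torus (they arise from the singularities of the curvature and of the angle functions of the confocal conic), and use a residue computation to obtain a sharp asymptotic for the Fourier coefficients $\hat g(n)$, showing that their decay rate is governed by the distance from the real axis to the nearest pole and that none of the resonant coefficients $\hat g(nq)$ vanishes. This is the mechanism used for the elliptic caustics in \cite{D-DC-RR}, and the main technical effort is to carry it through for the hyperbolic (librational) caustics, where the parametrization and the location of the singularities differ; large $q$ is handled by the asymptotic nonvanishing of $\hat g$, and the finitely many small $q$ by direct evaluation.

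Finally, once $M(\phi)$ is shown to be non-constant, it attains a nondegenerate maximum and minimum, whose critical points persist as genuine $q$-periodic orbits of the perturbed billiard (Poincar\'e--Birkhoff), while the remaining points of the resonant circle do not. Hence the invariant curve cannot survive as a whole and the corresponding hyperbolic caustic is destroyed. Since this argument applies to every resonant hyperbolic caustic, the normalized curvature flow breaks them all, which is the assertion of the theorem.
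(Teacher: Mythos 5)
Your outline follows the same overall route as the paper: treat the curvature flow as a first--order perturbation of the integrable elliptic billiard, compute the subharmonic Melnikov potential of the resonant family by the first--variation (envelope) formula, pass to the elliptic--function uniformization, and prove non-constancy by locating complex poles, exactly in the spirit of \cite{D-DC-RR} and \cite{SP}. However, there are two concrete gaps. First, the perturbation field. The paper's ``normalized'' flow is the Gage--Hamilton homothetic normalization, whose first-order velocity is $[h'-k']T+[k-h]N$ with $h$ the \emph{support function}, not a constant; the difference from the raw flow $kN$ is a homothety, whose Melnikov contribution is the constant $\mathcal L_0$ (since $\sum_k 2 h(s_k)\sin\theta_k=\mathcal L_0$ along the family), so the normalization is harmless. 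Your field $V=(\kappa-c)N$ with $c$ constant is the \emph{area-preserving} curve shortening flow, a genuinely different flow: relative to $kN$ it adds $-cN$, the parallel-curve (front) deformation, whose Melnikov contribution $-2c\sum_k\sin\theta_k$ is not obviously constant along a resonant family of the ellipse (parallel curves of an ellipse are not confocal conics). As written you would be proving non-constancy of the Melnikov potential of a different deformation; you must either use the homothetic normalization or separately show that $\sum_k\sin\theta_k$ is constant on the family.

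Second, and more seriously, your reduction of $M(\phi)$ to a single Birkhoff sum $\sum_{k}g(\phi+kp/q)$ over a rigid rotation hides the actual technical novelty of the hyperbolic case. The invariant curves carrying hyperbolic caustics are homotopically trivial loops around the elliptic period-two orbit and are invariant only under $B^{2}$; the paper must therefore build the generating function of $B^{2}$ by the ``conjunction'' lemma, and in the uniformizing variable only the even-indexed reflection points undergo the shift $t\mapsto t+2\delta$, while the odd-indexed ones are given by an implicit function $\varphi_{2j+1}=\Psi_{0}(t+2j\delta,\,t+(2j+2)\delta)$. The resulting potential is
$$\tilde L_{1}(t,\delta)=2\lambda\Bigl[\sum_{j=0}^{q-1}\mu_{1}(t+2j\delta)+\sum_{j=0}^{q-1}\mu_{1}\bigl(\Psi_{0}(t+2j\delta,t+(2j+2)\delta)\bigr)\Bigr],$$
and the pole/residue analysis has to track the singularities coming from \emph{both} families of points, using the analyticity of $\Psi_{0}$; it is not the projection of a single function onto the $q$-resonant Fourier modes. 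The remaining ingredients you list --- the identity $a\langle p_{j-1}-p_j,D^{-2}X_0(\varphi_j)\rangle=2\lambda$ reducing the potential to $2\lambda\sum_j\mu_1(\varphi_j)$, the order-four elliptic extension of $\mu_1$ with double poles and nonzero leading Laurent coefficient, and the persistence criterion that a surviving resonant caustic forces a constant Melnikov potential --- do match the paper's argument.
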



We also study the bifurcations of another type of periodic orbits, the normal periodic orbits $NP(2n)$ for a family of ovals satisfying $C_t$ satisfying the normalized curve shortening flow. 

 Similar to the period two orbits, these periodic orbits start perpendicular to the boundary curve and after some reflections, they hit the boundary again orthogonally. Thus, after the reflection they return tracing back the same polygonal arriving to the initial point.

 There is a geometric condition for the existence of these orbits in terms of curves similar to evolutes.  We show in Theorem 4.10, that, as the boundary curve evolves by the curvature flow,  these orbits disappear gradually.

 In other words, in Theorem we prove:

{\it If $X(t,s)$ is a family of convex curves satisfying the curvature flow, then for any integer $N>1$, there is a real number $T$ such that, for every  $t>T$, the billiard map associated with the curve does not have a normal periodic orbit of period $2n \leq  2N$.}

This also may be interpreted as {\it the long term breaking of a family of invariant rotational resonant curve around elliptic diameters}.

The paper is organized as follows: in Section 2 we describe the main properties of the curvature flow and briefly introduce the billiard map.

In Section 3, we discuss the example of the ellipse. In Section 4, the bifurcations of period two orbits (diameters) and the normal periodic orbits are considered. The last section is dedicated to the proof of Theorem 1.1.

\section{ The curve shortening flow and billiards}
\subsection{The curve shortening flow}
The main references for this section are  \cite{G1},\cite{GH} and \cite{chineses} .

We consider family of regular ($C^r$, $r \geq 2)$, simple, closed curves in ${\mathbb R}^2$, with  Euclidian metric, $X(t,u)$ that satisfies the curve shortening flow:

$$\frac{\partial X} {\partial t} (t,u)= k(t,u) N(t,u)$$

Where $k(t,u)$ is the curvature and $N(t,u)$ is the normal (unitary) vector pointing inward. Observe that $u$ is not the arc-length parameter.

The main results of this flow are the following:

\begin{theorem} [Gage-Hamilton] The curve shortening flow preserves convexity and shrinks any closed simple convex curve to a point.

\end{theorem}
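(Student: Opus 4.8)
The plan is to combine elementary evolution identities for the global geometric quantities with a maximum-principle argument for the curvature, and then to invoke the harder a priori estimates that rule out premature singularities. First I would derive the evolution equations for the enclosed area $A(t)$ and the length $L(t)$. Differentiating under the integral sign and using $X_t = kN$ with $N$ the inward normal, one obtains $\frac{dA}{dt} = -\int k\,ds$ and $\frac{dL}{dt} = -\int k^2\,ds$. For a convex curve the total curvature is $\int k\,ds = 2\pi$, so $\frac{dA}{dt} = -2\pi$ and hence $A(t) = A(0) - 2\pi t$. This already forces the enclosed area to vanish at the finite time $T = A(0)/(2\pi)$, which is the quantitative backbone of the ``shrinks to a point'' conclusion.

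Second, to prove that convexity is preserved I would pass to the tangent-angle parametrization $\theta$, which is legitimate as long as $k>0$, and derive the reaction--diffusion equation $\frac{\partial k}{\partial t} = k^2 \frac{\partial^2 k}{\partial \theta^2} + k^3$. Applying the parabolic maximum principle to this equation, the spatial minimum $k_{\min}(t)$ satisfies $\frac{d}{dt}k_{\min} \geq k_{\min}^3 > 0$ in the support sense, since $\frac{\partial^2 k}{\partial \theta^2} \geq 0$ at an interior minimum. Hence $k_{\min}$ stays strictly positive (indeed it cannot decrease) as long as the flow remains smooth, so $k>0$ is maintained and the curve stays strictly convex. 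This is a continuity/bootstrap argument: the angle parametrization and the equation remain valid precisely on the interval where positivity of $k$ has not yet been lost, and the maximum principle shows this interval is open and closed.

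Third, and this is where the genuine difficulty lies, I would have to exclude the possibility that the curvature blows up, or that the curve degenerates to a segment, strictly before the area reaches zero. The main obstacle is obtaining an a priori bound on $\max k$ in terms of the global geometry. This is exactly the content of Gage's isoperimetric-type estimate, which controls the ratio $L^2/A$ and bounds $\int k^2\,ds$, together with the ensuing integral and Harnack-type estimates that then bound $\max k$ on each compact subinterval of $[0,T)$. I would combine these with the Cauchy--Schwarz bound $\frac{dL}{dt} = -\int k^2\,ds \leq -\frac{(\int k\,ds)^2}{L} = -\frac{4\pi^2}{L}$, giving $\frac{d}{dt}(L^2) \leq -8\pi^2$, so that $L \to 0$ as well.

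With these estimates in hand the flow exists smoothly on the whole interval $[0,T)$, the diameter is controlled by $L/2 \to 0$, and both $A$ and $L$ vanish simultaneously at $T$; the positive lower bound on $k$ then guarantees that the shrinking limit is a genuine point rather than a collapsed arc. I would close by verifying this last point via the area identity and the curvature lower bound, which together certify that the rescaled curves stay uniformly convex and the unrescaled family converges to a single point.
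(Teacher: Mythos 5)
There is nothing in the paper to compare against here: this statement is quoted as background (Gage--Hamilton, \cite{G1},\cite{GH}) and the paper gives no proof of it. Judged on its own, your outline follows the standard Gage--Hamilton strategy: the area identity $A'(t)=-\int k\,ds=-2\pi$, the equation $k_t=k^2k_{\theta\theta}+k^3$ in the tangent-angle parametrization, and the maximum principle giving that $k_{\min}$ is nondecreasing, hence preservation of convexity. These elementary parts are correct.

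The gap is that the entire analytic core of the theorem is named rather than proved. Everything you write before the third step shows only that \emph{if} the flow stays smooth up to $T=A_0/(2\pi)$, then the area vanishes there; it does not exclude curvature blow-up or collapse to a segment at some earlier time, and that exclusion \emph{is} the theorem. You correctly identify that one needs control of $L^2/A$ (Gage's isoperimetric estimate) and a bound on $\max k$ (via the median-curvature/Harnack-type estimates of Gage--Hamilton), but you only cite them. In addition, the explicit chain of inequalities you do write down is not sufficient for the final conclusion: Cauchy--Schwarz gives $\frac{d}{dt}(L^2)\leq -8\pi^2$, hence $L^2(T)\leq L_0^2-4\pi A_0$, which by the classical isoperimetric inequality is nonnegative and is zero only when the initial curve is a circle. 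So $L\to 0$ at time $T$ does \emph{not} follow from this differential inequality alone; it requires the monotonicity of the ratio $L^2/A$ (so that $L^2\leq C\,A\to 0$), i.e., precisely the deep estimate you deferred. As it stands the proposal is an accurate road map of the published proof, not a proof; to make it self-contained you would have to supply the isoperimetric monotonicity, the bound of $k_{\max}$ in terms of the median curvature, and the bootstrap to higher-order estimates that keeps the flow smooth on all of $[0,T)$.
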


\begin{theorem}
[Grayson] Starting with any closed curve it becomes convex before it shrinks to a point.
\end{theorem}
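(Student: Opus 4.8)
The plan is to treat this statement as Grayson's theorem and to reduce it to the convex case already settled by the Gage--Hamilton theorem stated above, so that the new content is precisely that an arbitrary embedded closed curve becomes convex \emph{before} it develops a singularity. First I would establish short-time existence and uniqueness: after reparametrizing by arc length, the curvature $k$ evolves along the flow according to the reaction--diffusion equation $\partial_t k = \partial_s^2 k + k^3$ (keeping track of the fact that $\partial_t$ and $\partial_s$ do not commute), which is parabolic, so a unique smooth solution exists on a maximal interval $[0,T)$. Differentiating the length $L(t)$ and the enclosed area $A(t)$ then gives $\frac{dL}{dt} = -\int k^2\, ds \le 0$ and, since the total turning of a simple closed curve is $\int k\, ds = 2\pi$, $\frac{dA}{dt} = -2\pi$. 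Hence the area is exhausted at the finite time $T_0 = A(0)/(2\pi)$, and the flow must become singular no later than $T_0$; the whole problem reduces to controlling what kind of singularity can occur.

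The heart of the argument, and the step I expect to be the main obstacle, is to rule out \emph{local} singularities: one must show that the curve cannot pinch a neck or form a cusp while still enclosing positive area, so that the only way it can become singular is by collapsing to a point. I would first record that embeddedness is preserved by the avoidance principle (two initially disjoint solutions stay disjoint, by the maximum principle applied to their distance function), which prevents self-crossings. To exclude interior curvature blow-up I would rescale the flow parabolically about a putative singular point and invoke Huisken's monotonicity formula for the Gaussian-weighted length, whose monotonicity forces any blow-up limit to be a homothetically shrinking (self-similar) solution. The Abresch--Langer classification then says that the only \emph{embedded} closed self-shrinker is the round circle, so a singularity can only be the whole curve shrinking to a point, never a localized pinch. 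Producing the estimates that make this rescaling argument rigorous --- equivalently, Grayson's original geometric comparison and isoperimetric-type bounds preventing the curve from approaching itself --- is the technically demanding core.

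Granting that the only singularity is a global collapse, the conclusion follows. The rescaling analysis near the singular time $T_0$ shows that, suitably normalized, the curve converges to the shrinking circle; in particular $k$ becomes strictly positive everywhere at some time $t_1 < T_0$, i.e. the curve is convex. From $t_1$ onward I would simply invoke the Gage--Hamilton theorem stated above, which guarantees that the now-convex curve remains convex and shrinks to a round point. Since in the present paper this result is used only as input for the normalized flow, in practice I would cite Grayson \cite{Grayson} for the passage from embedded to convex and Gage--Hamilton \cite{G1,GH} for the convex evolution, rather than reproduce the full parabolic PDE analysis.
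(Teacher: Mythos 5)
The paper does not prove this statement at all: it is quoted as background, attributed to Grayson with only the citation \cite{Grayson}, so there is no in-paper argument to compare yours against. Your closing move --- cite Grayson for embedded-to-convex and Gage--Hamilton for the convex stage --- is exactly what the paper does, and is the right call here, since nothing in the paper depends on the internals of Grayson's proof. As a sketch of the actual proof your outline is broadly sound (short-time existence via the parabolic equation $k_t=k_{ss}+k^3$, the identities $L'=-\int k^2\,ds$ and $A'=-2\pi$, preservation of embeddedness by avoidance), but one step is stated too strongly: Huisken's monotonicity formula forces the blow-up limit to be a self-shrinker only for \emph{type I} singularities, and the Abresch--Langer classification then applies; a \emph{type II} rescaling produces an eternal solution (the Grim Reaper), which is not self-similar and must be excluded separately, typically by Huisken's extrinsic-versus-intrinsic distance comparison or an isoperimetric-ratio bound. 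You gesture at this under ``isoperimetric-type bounds,'' but as written the two mechanisms are conflated; also note that this monotonicity route is Huisken's later simplification rather than Grayson's original 1989 argument. None of this affects the paper, which uses the theorem only as a black box to justify restricting attention to convex curves.
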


Let $A(t)=  \frac{1}{2} \int_{\gamma(t)} xdy-ydx$  be the area enclosed by the curve $\gamma(t)$.Then $A'(t)=-2\pi$.

This result tells two things:
\begin{itemize}
    \item[a)] The existence time $\tau$ of the solution  is given by $\tau = \frac{A(0)}{2 \pi};$
    \item[b)]  there is a natural normalization of the flow by taking a homothety $Y(t,u)= \frac{1}{\sqrt{\tau-2t}}X(t,u)$ such that the area of the region enclosed by the curve $Y(t,.)$ is constant equal to $\pi$.
\end{itemize}

There is also a time normalization given by $\bar t= \frac{1}{2} ln[\frac{\tau}{\tau-t}]$, so that $t \rightarrow \tau$ implies
${\bar t} \rightarrow \infty$.

For the normalized flow, the area enclosed by the curves is constant equal to $\pi$, and Gage and Hamilton prove that the family converges uniformly to a circle of radius 1.

In this work we consider {\it Normalized flow} and observe that the properties of the billiards maps do not change by the normalization process.

We state some facts  which are going to be useful in later.

Convex curves may be parametrized by the angle $\theta$ which its tangent makes with a fixed direction, denoting by $T(\theta)$ the unitary tangent vector and by $N(\theta)$ its normal vector pointing inwards, we get the Frenet's formula:

$$T'(\theta)=N(\theta)  \ \ \ \   N'(\theta)=-T(\theta)$$

Throughout this paper $g'$ will always denote the derivative of a function $g(t,\theta)$ with respect to the variable $\theta$ and $g_{t}$ will denote the derivative with respect to $t$. 
Assuming that the domain $U$ bounded by $X$ contains the origin $O=(0,0)$ we let  $h(\theta)= -\langle X(\theta),N(\theta)\rangle$ be the support function with respect to $O$. Then $X(\theta)=h'(\theta) T(\theta)-h(\theta)N(\theta)$ 
and $h''(\theta)=R(\theta)-h(\theta)$, for $R(\theta)=\frac{1}{k(\theta)}$, the radius of curvature. 

Using this representation, the equation for the normalized flow, with $t$ the normalized time,  is

$$X_{t}(t, \theta)=[h'(t,\theta)-k'(t,\theta)] T(\theta)+[k(t,\theta)-h(t,\theta)]N(\theta)$$

\noindent As for the normalized geometric quantities we have the following evolution equations:

$$k_{t}(t,\theta)={[k(t,\theta)]}^{2} k''(t,\theta) +{[k(t,\theta)]}^{3}- k(t,\theta)$$

$$h_{t}(t,\theta)=h(t,\theta)- k(t,\theta)$$

One of the results which have been proven by Gage and Hamilton  is the uniform  exponential decay of the derivatives of the curvature:
\begin{theorem}
	For every $n\geq 1$ and $0<\alpha<1$, there exists a constant $C(n)$ such that:
	$$    \Vert k^{(n)} \Vert _{\infty} \leq C(n) e^{-2\alpha t}$$
\end{theorem}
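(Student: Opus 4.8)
The plan is to treat the evolution equation for the curvature, $k_{t}=k^{2}k''+k^{3}-k$, as a semilinear parabolic equation on the circle $\{\theta\in\mathbb{R}/2\pi\mathbb{Z}\}$ and to read off the exponential rate from the spectrum of its linearization about the stationary solution $k\equiv 1$. Writing $k=1+\phi$ one finds $\phi_{t}=\phi''+2\phi+O(\phi^{2},\phi\phi'')$, so the linearized operator $L\phi=\phi''+2\phi$ has eigenvalues $2-m^{2}$ on the Fourier modes $e^{im\theta}$. Only the modes $m=0$ and $m=\pm 1$ are neutral or unstable; every mode with $|m|\ge 2$ decays at least like $e^{-2t}$. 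I would therefore first argue that the would-be growing modes are pinned by the geometric constraints of the flow: the $m=0$ mode by the normalization $\mathrm{Area}\equiv\pi$, and the $m=\pm 1$ modes by the closure condition $\int_{0}^{2\pi}R(\theta)(\cos\theta,\sin\theta)\,d\theta=0$, which holds identically for every closed curve and forces the $\pm 1$ Fourier coefficients of $R=1/k$, hence (to leading order) of $\phi$, to be quadratically small. After this reduction the surviving modes decay like $e^{-2t}$, which is exactly the rate $e^{-2\alpha t}$ (with $\alpha$ close to $1$) claimed, the slack $\alpha<1$ absorbing the nonlinear corrections. Throughout I use that, by the Gage--Hamilton--Grayson theorems quoted above, $k\to 1$ uniformly and $k,k^{-1}$ are uniformly bounded for large $t$, so the equation is uniformly parabolic with controlled coefficients.

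The core estimate is an energy inequality for $u=k'$. Differentiating the curvature equation in $\theta$ gives $u_{t}=k^{2}u''+2kuu'+3k^{2}u-u$; multiplying by $u$, integrating over the circle and integrating by parts, the terms cubic in $u$ cancel exactly and one is left with
\[
\frac{d}{dt}\,\frac12\int_{0}^{2\pi}u^{2}\,d\theta=-\int_{0}^{2\pi}k^{2}(u')^{2}\,d\theta+\int_{0}^{2\pi}(3k^{2}-1)\,u^{2}\,d\theta .
\]
Since $k\to 1$ and $u=k'$ has zero mean, after splitting off the $m=\pm1$ part (controlled by the closure constraint as above) the Wirtinger inequality $\int(u')^{2}\ge 4\int u^{2}$ on the remaining modes yields a differential inequality $\frac{d}{dt}\int u^{2}\le-(4-\varepsilon)\int u^{2}$, hence $\Vert k'\Vert_{L^{2}}\le Ce^{-(2-\varepsilon)t}$. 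Running the same computation for $w=k^{(n)}$ gives analogous inequalities, and an induction on $n$ (each step using the decay already obtained for the lower derivatives together with uniform a priori bounds on the higher derivatives coming from parabolic smoothing) propagates the $L^{2}$ decay to all orders. Finally I would upgrade to the sup-norm by the one-dimensional interpolation inequality $\Vert w\Vert_{\infty}\le C\bigl(\Vert w\Vert_{L^{2}}^{1/2}\Vert w'\Vert_{L^{2}}^{1/2}+\Vert w\Vert_{L^{2}}\bigr)$, which converts the exponential $L^{2}$ decay of $k^{(n)}$ and $k^{(n+1)}$ into $\Vert k^{(n)}\Vert_{\infty}\le C(n)e^{-2\alpha t}$.

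I expect the main obstacle to be the rigorous treatment of the neutral and unstable low modes rather than the energy computation itself: one must show that the area normalization and the closure condition really do suppress the $m=0,\pm1$ Fourier modes to an order decaying faster than $e^{-2\alpha t}$, and that the nonlinear coupling does not feed energy back from the decaying high modes into these constrained modes. Equally, the induction requires uniform-in-time bounds $\Vert k^{(n)}\Vert_{\infty}\le C(n)$; these follow from interior parabolic (Bernstein-type / Schauder) estimates once $k$ and $k^{-1}$ are known to be bounded, but keeping the constants independent of $t$ is where care is needed. The choice $0<\alpha<1$ is precisely what leaves room for these nonlinear and constraint corrections while remaining below the optimal linear rate $e^{-2t}$ set by the $m=\pm2$ modes.
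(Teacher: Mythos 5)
First, a point of comparison: the paper does not prove this statement at all --- it is quoted as a known result of Gage--Hamilton (see also Chou--Zhu), so there is no internal proof to measure your attempt against. Judged on its own terms, your outline follows the classical route (linearize about $k\equiv 1$, energy estimates, interpolation up to the sup norm), and your central computation is correct: for $u=k'$ the cubic terms do cancel exactly and one obtains $\frac{d}{dt}\frac12\int u^2=-\int k^2(u')^2+\int(3k^2-1)u^2$. The spectral bookkeeping ($2-m^2$ on the mode $e^{im\theta}$) and the final Agmon-type interpolation are also fine.

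The genuine gap is exactly where you suspect it, and as written the argument fails there rather than merely needing polish. Since $u=k'$ only has zero mean, Wirtinger gives just $\int(u')^2\ge\int u^2$, and with $3k^2-1\to 2$ the identity above yields $\frac{d}{dt}\int u^2\le (2+\varepsilon)\int u^2$ --- growth, not decay. Everything therefore hinges on suppressing the $m=\pm1$ modes, and the two ingredients you invoke do not yet accomplish this. The closure condition kills the $\pm1$ modes of $R=1/k$ exactly, but the $\pm1$ modes of $k$ (hence of $k'$) are only bounded by $C\Vert k-1\Vert_\infty^2$, and at this stage you have no decay rate for $\Vert k-1\Vert_\infty$ to insert into that bound, so the reduction is circular. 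Moreover the energy identity is not diagonal in Fourier modes: ``splitting off the $m=\pm1$ part'' is incompatible with the nonlinear terms, which transfer mass between modes, so the constant $4$ in Wirtinger cannot simply be applied to ``the remaining modes.'' Closing the argument requires first establishing some (possibly slow) exponential decay of $\Vert k-1\Vert_\infty$ by independent means (this is where Gage--Hamilton's entropy estimate and $k_{\max}/k_{\min}\to 1$ enter), then using the closure identity to get quadratic smallness of the $\pm1$ modes, and bootstrapping the rate up to $2\alpha$ for any $\alpha<1$. The same difficulty recurs in your induction on $n$: the ``analogous inequalities'' for $k^{(n)}$ contain lower-order commutator terms that do not cancel, and the uniform bounds $\Vert k^{(n)}\Vert_\infty\le C(n)$ you wish to invoke are themselves part of what is being proved. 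In short, the skeleton is right and matches the classical proof in spirit, but the unstable-mode step --- the actual content of the theorem --- remains open in your write-up.
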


Moreover, for any $\epsilon>0$, there is $C(\epsilon)>0$  and  $t_{\epsilon}>0$ such that  $$\vert  k(\theta,t) -1 \vert < C(\epsilon)e^{-(2-\epsilon)t}, \forall t > t_{\epsilon}$$

Analogously for the radius of curvature $R(u)$.

  \subsection{Strictly convex billiards}


The billiard map is obtained from the free motion of a point particle on the planar region $U$  being reflected
elastically at the impacts on its boundary $X$. The trajectories are polygonals on this planar
region.
Since the motion is free inside the region, it is completely determined by the points of impact at $X$ and the direction of the motion. This direction is measured by the angle between the line and the tangent of the curve.

Therefore, to each oval parametrized by the angle $\theta$ of a tangent vector to a fixed direction, we
associated a billiard map $B$  from the cylinder ${\mathbb S}^1\times (0,\pi)$ into itself,
defined  in the  following way:  given a point $ x \in X$, let $\rho_{(x,\phi)}$ be the ray through $x$ making a angle $\phi$ with the vector tangent at $x$.
The billiard map assigns to $(x,\phi)$ the point $(x_1,\phi_1)$, where $x_1$ is the intersection of $\rho_{(x,\phi)}$ with the curve $X$, and $\phi_1$ is the reflected angle between $\rho_{(x,\phi)}$ and the tangent to $X$ at $x_1$.

If the  curve $X$ is parametrized by the arc-length $s$, then the billiard map is a monotonous twist map and has a generating function:
$L(s,s')=||X(s')-X(s) ||$ where $||\cdot||^2= \langle \cdot,\cdot\rangle$, the euclidean inner product. This means:
$$-\frac{\partial L}{\partial s}=\cos(\phi)$$

$$\frac{\partial L}{\partial s'}=\cos(\phi_{1})$$


\section{Bifurcations of normal periodic points along the curvature flow}
\subsection{Period two orbits  }

Let $X(t, \theta)$ be a family of simple convex plane curves parametrized by $\theta$, the angle between its tangent vector and a fixed direction. Suppose that $X$ satisfies the normalized curve shortening equation.






It is well known that the period two orbits are the critical points of $\ell(t,\theta)=\Vert X(t,\theta + \pi)-X(t,\theta) \Vert$.

In other words, the value $\theta$ corresponds to a periodic orbit of period two if and only if  $f(t,\theta)=\langle T(\theta), X(t,\theta + \pi)-X(t,\theta) \rangle=-h'(t, \theta+\pi)-h'(t, \theta)=0$ 

The classification of period two orbits is also well-known:

\begin{itemize}
\item Hyperbolic: $ \ell(t, \theta)-[R(t, \theta)+R(t, \theta+\pi)] > 0$ or $ \ell(t, \theta)-[R(t, \theta)+R(t, \theta+\pi)] <0$ and $[ \ell(t, \theta)-R(t, \theta)][\ell(t,\theta)-R(t, \theta+\pi)] >0$

 \item Elliptic: $ \ell(t, \theta)-[R(t, \theta)+R(t, \theta+\pi)] < 0$ and  $[ \ell(t, \theta)-R(t, \theta)][\ell(t,\theta)-R(t, \theta+\pi)] >0$

 \item Parabolic: $ \ell(t, \theta)-[R(t, \theta)+R(t, \theta+\pi)]= 0$ or  $ \ell(t, \theta)-[R(t, \theta)+R(t, \theta+\pi)] <0$ and $[ \ell(t, \theta)-R(t, \theta)][\ell(t,\theta)-R(t, \theta+\pi)] =0$

\end{itemize}

\begin{remark}  For sufficiently large $t$ and  for every $\theta$,  $ \ell(t, \theta)-R(t, \theta) >0$.
\end{remark}
Indeed,
let $w(t)= \int_{{\mathbb S}^{1}} log(h(t,\theta)) d\theta$.

Then $$w'(t)=\int_{{\mathbb S}^{1}} \frac{(h(t,\theta)-k(t,\theta)}{h(t,\theta)} d\theta=2\pi -\int_{{\mathbb S}^{1}} \frac{k(t,\theta)}{h(t,\theta)} d\theta$$

Substituting $$\frac{k(t,\theta)}{h(t,\theta)} =\frac{[{h(t,\theta)-k(t,\theta)]}^2}{h(t,\theta)k(t,\theta)} +2-\frac{h(t,\theta)}{k(t,\theta)}$$

and using the fact $\int_{{\mathbb S}^{1}}\frac{h(t,\theta)}{k(t,\theta)} d\theta=2 Area(U)= 2\pi$ to obtain

$$w'(t)=-\int_{{\mathbb S}^{1}}\frac{{[h(t,\theta)-k(t,\theta)]}^2}{h(t,\theta)k(t,\theta)} d\theta <0$$

So $w(t)$ is strictly decreasing and bounded from below. Hence $w'(t)$ converges to $0$. Since $k(t,\theta)$ converges uniformly to 1 we obtain that $h(t,\theta)$ also converges uniformly to 1.

As a consequence we obtain $\ell(t, \theta)-R(t, \theta)=h(t,\theta +\pi)+h(t,\theta)- R(t,\theta)$ converges to 1.

Hence, by assuming $t>0$ large enough, the above conditions may be formulated in a simplified form:

\begin{itemize}
\item Hyperbolic: $ \ell(t, \theta)-[R(t, \theta)+R(t, \theta+\pi)] > 0$

 \item Elliptic: $ \ell(t, \theta)-[R(t, \theta)+R(t, \theta+\pi)] < 0$

 \item Parabolic: $ \ell(t, \theta)-[R(t, \theta)+R(t, \theta+\pi)]= 0$

\end{itemize}

Therefore, the dynamical properties of the period two orbits, and their bifurcations are obtained by analyzing the curve $f(t,\theta)=0$. 

Let us assume that $\theta_{0}$ is a period two orbit for the curve $X(t_{0}, \theta)$, that is:  
$ f(t_{0},\theta_{0})=\langle X(t_{0},x_{0})-X(t_{0},x_{0}),T(\theta) \rangle=0=-[h'(t_{0},\theta_{0}+\pi)+h'(t_{0},\theta_{0})]$. 
Observe that if $ f'(t_{0},\theta_{0})= \ell(t, \theta)-[R(t, \theta)+R(t, \theta+\pi)] \neq 0$ then by the Implicit function theorem there is a $C^{r}$ family of nondegenerate period-two orbits $(t,\theta(t))$, Hyperbolic or Elliptic according to the sign of $ f'(t_{0},\theta_{0})$.

 The {\it  parabolic} period-two orbits  are   defined  by:

$ f(t_{0},\theta_{0})=0$  and $f'(t,\theta) =h(t, \theta)+h(t, \theta + \pi)- [R(t, \theta)+R(t, \theta+\pi)]$

Recall the equation of the evolute of the curve $X(t,.)$: 
$E(t,\theta)=X(t,\theta)+R(t,\theta)N(\theta)=h'(t,\theta)T(\theta)+[R(t,\theta)-h(t,\theta)]N(\theta)$.

The following difference characterizes the period-two orbits: 
  $$E(t,\theta+\pi)-E(t,\theta)=f(t,\theta)T(\theta)+f'(t,\theta)N(\theta)$$


 A parabolic period-two orbit is characterized by $E(t_{0},\theta_{0}+\pi)-E(t_{0},\theta_{0})=0$.  ($ f'(t,\theta)$  measures the speed of crossing of the two points).

\begin{eqnarray}\label{xxx}
\begin{array}{ll}
f_{t}(t,\theta) 



& = -  h'(t, \theta+ \pi) - h'(t, \theta) + k'(t, \theta+ \pi)+k'(t, \theta)\\\\
&=-f(t, \theta)+k'(t, \theta+ \pi)+k'(t, \theta)

\end{array}
\end{eqnarray}






 Integrating we obtain:

$$f(t, \theta)= e^{t_0-t} f(t_{0}, \theta) +  e^{t_0-t} \{ \int_{t_{0}}^{t} e^{s} [ k'(s, \theta+ \pi)+k'(s, \theta)] ds\}$$

 Hence if  $\int_{t_{0}}^{t} e^{s} [ k'(s, \theta_{0}+ \pi)+k'(s, \theta_{0})] ds \neq 0$  it follows  that $f(t, \theta)\neq 0$ for $t>t_{0}$ and $\theta$ near $\theta_{0}$ and there is no  period two orbit for the curve $X(t,\theta)$.  A typical  bifurcation of a  parabolic periodic point.





\begin{proposition}
Let  $\theta_{0}$ be a parabolic orbit of period two for the billiard associated with the curve $X(t_{0},.)$. Let us assume that there exists an  integer $n>1$ such that
$k^{(n)}(t_{0},\theta_{0})+ k^{(n)}(t_{0},\theta_{0} + \pi) \neq 0$ .

Then there is a neighborhood of $(t_{0}, \theta_{0})$ such that, any periodic orbit of period two in $V_{0} -\{ (t_{0},\theta_{0}) \}$  is non-degenerate, elliptic or hyperbolic.
\end{proposition}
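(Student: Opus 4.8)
The plan is to recognize a parabolic period-two orbit as a common zero of the pair $\Phi=(f,f')$ and to prove that $(t_0,\theta_0)$ is an \emph{isolated} zero of $\Phi$; equivalently, that the evolute-difference map $P(t,\theta)=E(t,\theta+\pi)-E(t,\theta)=f(t,\theta)T(\theta)+f'(t,\theta)N(\theta)$ has an isolated zero at $(t_0,\theta_0)$. Throughout I write $G(t,\theta)=k'(t,\theta)+k'(t,\theta+\pi)$, so that the hypothesis reads $\partial_\theta^{\,n-1}G(t_0,\theta_0)\neq 0$ for some $n>1$, and I use that the normalized flow is real-analytic in $\theta$ (and, for $t>0$, jointly in $(t,\theta)$), so that $f,f',G$ are analytic near $(t_0,\theta_0)$.

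First I would carry out the first-order analysis. Using \eqref{xxx}, on any period-two orbit (where $f=f'=0$) one has $f_t=G$ and, differentiating in $\theta$, $f'_t=G'$; combined with $\partial_\theta P=(f+f'')N$ this gives $\partial_t P=G\,T+G'\,N$ and $\partial_\theta P=f''\,N$ at $(t_0,\theta_0)$. Hence in the frame $(T,N)$ the Jacobian of $\Phi$ at $(t_0,\theta_0)$ is triangular with determinant $G(t_0,\theta_0)\,f''(t_0,\theta_0)$. If this product is nonzero the inverse function theorem makes $\Phi$ locally injective and $(t_0,\theta_0)$ is isolated, so the whole difficulty is concentrated in the degenerate case $G(t_0,\theta_0)=0$ or $f''(t_0,\theta_0)=0$.

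To treat the degenerate case I would argue by contradiction using analyticity: if $(t_0,\theta_0)$ were not isolated, the analytic set $\{f=f'=0\}$ would contain an analytic arc $\gamma(\tau)=(t(\tau),\theta(\tau))$ through it. Differentiating $f\circ\gamma\equiv 0$ and $f'\circ\gamma\equiv 0$ and inserting the on-orbit relations $f_t=G$, $f'_t=G'$ yields $G\,\dot t\equiv 0$ and $G'\dot t+f''\dot\theta\equiv 0$ along $\gamma$. This splits into a horizontal branch ($\dot t\equiv 0$) and a branch with $G\equiv 0$ along $\gamma$. The horizontal branch forces $f(t_0,\cdot)\equiv 0$, i.e.\ a one-parameter family of period-two orbits at time $t_0$; since $f=-H'$ with $H=h(t_0,\cdot)+h(t_0,\cdot+\pi)$ this means $X(t_0,\cdot)$ is of constant width near $\theta_0$, and one checks $f'\equiv 0$ there as well. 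I would exclude this branch by the standing assumption that $\theta_0$ is an isolated period-two orbit of $X(t_0,\cdot)$; the hypothesis rules out the most symmetric instance (the circle, where $k^{(n)}(t_0,\theta_0)+k^{(n)}(t_0,\theta_0+\pi)\equiv 0$), and this is precisely the configuration the hypothesis is meant to forbid.

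The crux is the remaining branch, where $G\equiv 0$ along $\gamma$. Here the tangency conditions give $\partial_\theta^{\,m}G(t_0,\theta_0)=0$ for the orders dictated by the direction of $\gamma$, and I would propagate this vanishing to all orders by repeatedly differentiating $G\equiv 0$ along $\gamma$ and substituting the curvature evolution equation $k_t=k^2k''+k^3-k$, which lets one trade $t$-derivatives of $G$ for higher $\theta$-derivatives of $k$. The aim is to conclude $k^{(n)}(t_0,\theta_0)+k^{(n)}(t_0,\theta_0+\pi)=0$ for every $n$, contradicting the hypothesis. I expect this propagation to be the main obstacle: one must organize the repeated differentiations so that the highest-order term is exactly $\partial_\theta^{\,n-1}G$, controlling the products produced by the nonlinearity $k^2k''$ and by the relation $R=1/k$ that intervenes whenever one converts between derivatives of $f'$ and derivatives of $k$. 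Once these combinations are shown to vanish, the contradiction closes the argument, so $(t_0,\theta_0)$ is isolated and every nearby period-two orbit is nondegenerate, hence elliptic or hyperbolic by the trichotomy recalled above.
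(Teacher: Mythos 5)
Your reduction of the statement to showing that $(t_0,\theta_0)$ is an isolated common zero of $(f,f')$ is the right reading, and the first-order Jacobian computation is correct. But the proof does not close, and the gap sits exactly where you flag it. The hypothesis gives no control on $G(t_0,\theta_0)=k'(t_0,\theta_0)+k'(t_0,\theta_0+\pi)$ (that would be the excluded case $n=1$) nor on $f''(t_0,\theta_0)$, so the nondegenerate IFT case essentially never applies and the whole burden falls on the propagation step, which you leave as a plan. That plan is doubtful on its own terms: vanishing of $f$ and $f'$ (hence of $G$) along a single analytic arc $\gamma$ only constrains derivatives of $G$ in the direction of $\gamma$; each differentiation yields one scalar relation mixing $t$- and $\theta$-derivatives, and after trading $t$-derivatives for $\theta$-derivatives via $k_t=k^2k''+k^3-k$ one gets nonlinear combinations whose vanishing does not force the vanishing of the pure $\theta$-derivatives $k^{(n)}(t_0,\theta_0)+k^{(n)}(t_0,\theta_0+\pi)$. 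Your horizontal branch is also not disposable from the stated hypotheses: you must import an extra ``isolated diameter at time $t_0$'' assumption that the proposition does not contain (a curve of locally constant width has $f(t_0,\cdot)\equiv f'(t_0,\cdot)\equiv 0$ near $\theta_0$ while the curvature hypothesis can still hold), so that branch has to be handled by restricting attention to $t\neq t_0$, as the paper implicitly does. Finally, real-analyticity of the flow for $t>0$ is an extra input the paper never invokes and that the finite-$n$ hypothesis is designed to avoid.

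The idea you are missing is the one the paper builds this section on: the normalized flow gives the linear ODE $f_t=-f+k'(t,\theta)+k'(t,\theta+\pi)$, hence, differentiating in $\theta$, $f'_t=-f'+k''(t,\theta)+k''(t,\theta+\pi)$, which integrates by variation of constants to $f'(t,\theta)=e^{t_0-t}f'(t_0,\theta)+e^{t_0-t}\int_{t_0}^{t}e^{s}\,[k''(s,\theta)+k''(s,\theta+\pi)]\,ds$. For $n=2$ the integrand is sign-definite near $(t_0,\theta_0)$ and, since $f'(t_0,\theta_0)=0$, one reads off $f'(t,\theta_0)\neq 0$ for $t\neq t_0$ close to $t_0$; for $n>2$ one first Taylor-expands $\theta\mapsto k''(t_0,\theta)+k''(t_0,\theta+\pi)$ at $\theta_0$, whose leading coefficient is exactly the hypothesized $k^{(n)}(t_0,\theta_0)+k^{(n)}(t_0,\theta_0+\pi)$, to isolate the zero before integrating. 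This converts the curvature hypothesis directly into nondegeneracy of the nearby diameters without any curve-selection or propagation argument; I would rebuild your proof around that integral formula.
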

\begin{proof}

If $n=2$, then  $k''(s,\theta_{0})+ k''(s,\theta_{0} + \pi) \neq 0$ for $s> t_{0}$ small. Hence

$$f'(t, \theta_{0})= e^{t_0-t} f'(t_{0}, \theta_{0}) +  e^{t_0-t} \{ \int_{t_{0}}^{t} e^{s} [ k''(s, \theta_{0}+ \pi)+k''(s, \theta_{0})] ds\} \neq 0$$

This implies that any period two orbit for $X(t,.)$ near $(t_{0}, \theta_{0})$ is hyperbolic or elliptic.

For general $n$, the argument is similar. Let $n > 2$ be the smallest integer such that $k^{(n)}(t_{0},\theta_{0})+ k^{(n)}(t_{0},\theta_{0} + \pi) \neq 0$

By the Taylor expansion we get  $$k''(t_{0},\theta_{})+ k''(t_{0},\theta + \pi) = [k^{(n)}(t_{0},\theta_{0})+ k^{(n)}(t_{0},\theta_{0} + \pi)] \frac{{(\theta -\theta_{0})}^{n}}{n!}+ O(n)$$

Hence we also get an isolated zero at $\theta_{0}$ and  there is a neighborhood $V_{0}$ of $(t_{0}, \theta_{0})$ such that  $k''(t,\theta)+ k''(t,\theta + \pi) \neq 0$ in
$V-(t_{0}.\theta_{0})$.

By applying the same reasoning as above we get $f'(t, \theta)\neq 0$ and the conclusion of the statement holds, any periodic orbit of period two in $V_{0} -(t_{0},\theta_{0})$ is non-degenerate, elliptic or hyperbolic.
\end{proof}



If $\forall t \geq t_{0}$ such that $f(t,\theta_{0})=0$, we have $k'((t,\theta_{0}+\pi)+k'(t,\theta_{0}) =0$, then the line 
$\theta=\theta_{0}, t\geq t_{0}$ defines a family of parabolic period-two orbits.

 In other words the point $E(t,\theta_{0}+\pi)=E(t,\theta_{0})$ moves in the chord from $X(t,\theta_{0})$ to $X(t,\theta_{0}+\pi)$.

Since $R(t,\theta) \rightarrow 1$ uniformly as  $t \rightarrow \infty$  the width $h(t,\theta_{0}+\pi)+h(t,\theta_{0}) \rightarrow 2$ and $ E(t,\theta_{0})$ converges to the origin, 
the middle-point of the chord.



\begin{example} Curve of constant width
\end{example}
As an example of the above evolution of diameters, let us consider a curve of constant width. In other words, a closed simple convex curve such that any point of type $(\theta,\frac{\pi}{2})$ is a diameter. This means that in the phase space of the billiard map, the straight line $\Gamma \{ \phi=\frac{\pi}{2} \}$ is invariant.

 The condition of constant width is equivalent to:

$f(t,\theta)=\langle X(\theta+\pi)-X(\theta), T(\theta)\rangle =0$ or 
$ h(\theta)+h(\theta + \pi)=d, \forall \theta \in [0,2\pi]$, $d$ a positive number.

If   $f_{t}(0,\theta_{0})\neq 0$, for  some $\theta_{0}$, then for  $t>0$ small, the curve $X(t,\theta)$ is not of constant width anymore. Or, the invariant line is destroyed.

Hence, let us assume that the initial curve of constant width also satisfies $f_{t}(0,\theta_{0})= k'(0, \theta+ \pi)+k'(0, \theta)=0, \forall \theta$. 

We claim that such a curve is already the circle of radius one, that is the stationary curve. To see this,  we recall that $h''(\theta)=R(\theta)-h(\theta)$, hence, for curves of constant width we have $R(\theta+\pi)-h(\theta+\pi)+R(\theta)-h(\theta)=0$.
If this is the case,  $R(0,\theta+\pi)+R(0,\theta)=d$ and, using $k'(0, \theta+ \pi)+k'(0, \theta)=0$, we obtain that $k(0,\theta)$, a circle with area $\pi$.




The conclusion is:  in the family deformed by the curvature flow starting in a curve of constant width, which is not the circle of radius one,  the dynamic of the billiard map immediately gets more complicated. By Angement's theorem \cite{Ang}, we  get positivity of topological entropy due to the presence of non-Birkhoff periodic orbits and an instability region around the period two orbits.

These special periodic orbits are the subject of the next Section.

\subsection{The family $NP(2n)$ of Normal Periodic Points}

In this section we describe the evolution of a family of periodic orbits close related to the family of period two orbits.


\begin{definition}(Normal periodic orbits) We say that $\theta_{0}$ is a normal periodic orbit for a billiard map $B$ if there is an integer $n\geq 2$ and a point  $\theta_{1} \neq \theta_{0}$ such that $B^n(\theta_{0},\frac{\pi}{2})=(\theta_{1},\frac{\pi}{2})$.
\end{definition}

It follows immediately from the definition that $(\theta_{0},\frac{\pi}{2})$ is a periodic point of period $2n$. The trajectory of this periodic orbit starts perpendicular to the curve and after $n-1$ reflections it hits the boundary again orthogonally. Thus, after the reflection it returns tracing back the same polygonal arriving at the initial point.

$NP(2)=B(\Gamma)\cap \Gamma \neq \emptyset$, but as the case  of ellipses with small excentricity, the set $NP(2n)$ may be empty for some $n$. 

In this Section we show  that as the family curves evolves as $t \rightarrow \infty$ by the curvature flow, these orbits gradually disappear. Is is done by describing gemetric necessary conditions for the existence of normal periodic points.

Let $\Gamma$ be the orthogonal wavefront, that is, the subset of initial conditions $(\theta,\frac{\pi}{2})$. $\Gamma$ is the fixed point set of the involution $\tau(\theta,\phi)=(\theta,\pi-\phi)$. Since $B^{-1}=\tau \circ B \circ \tau$,  if $z \in \Gamma$  and $n$ is the smallest positive integer such that  $B^{2n}(z)=z$, then $B^{n}(z)=B^{-n}(z)=\tau \circ B^{n}(z)$, that is, $B^{n}(z) \in \Gamma$.


Remark:  if there exists a $B^2$ invariant curve $\zeta$, in a disc $V$ around an elliptic diameter $\sigma_{0}$, which is homotopically non-trivial in $V-\{\sigma_{0} \}$, with rational rotation number $\frac{m}{2n}$, then   $\zeta \cap \Gamma \subset NP(2n)$. In other words, if $NP(2n)= \emptyset$ then there is no such invariant curve.




\begin{proposition}
Suppose that for every positive integer $n$, $B^{n}(\Gamma)$ is a graph over $\Gamma$. Then $B(\Gamma)=\Gamma$ and the boundary of the billiard table is a curve of constant width.

\end{proposition}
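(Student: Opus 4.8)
The plan is to show that the hypothesis forces the line $\Gamma=\{\phi=\pi/2\}$ to be invariant, i.e. $B(\Gamma)=\Gamma$; the constant-width conclusion is then immediate from the equivalence established in the constant-width example ($\Gamma$ invariant $\iff$ $h(\theta)+h(\theta+\pi)$ constant $\iff$ constant width). First I would record that the hypothesis is automatically two-sided. Since $\tau(\theta,\phi)=(\theta,\pi-\phi)$ fixes $\Gamma$ and preserves the $\theta$-coordinate, and $B^{-1}=\tau\circ B\circ\tau$, one has $B^{-n}(\Gamma)=\tau\bigl(B^{n}(\Gamma)\bigr)$; as $\tau$ is a fibre reflection it carries graphs to graphs, so $B^{n}(\Gamma)$ being a graph for all $n\ge 1$ gives that $B^{n}(\Gamma)$ is a graph for every $n\in\mathbb Z$.

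Next I would use exactness to show that each $B^{n}(\Gamma)$ either equals or crosses $\Gamma$. The billiard map is exact symplectic with generating function $L$: from $-\partial L/\partial s=\cos\phi$ and $\partial L/\partial s'=\cos\phi_{1}$ one gets $B^{*}(\cos\phi\,ds)=\cos\phi\,ds+dL$, so the line integral $\oint_{C}\cos\phi\,ds$ is $B$-invariant for every essential loop $C$. Since $\cos\phi=0$ on $\Gamma$, this yields $\oint_{B^{n}(\Gamma)}\cos\phi\,ds=0$ for all $n$. Writing $B^{n}(\Gamma)$ as a graph $\phi=\Phi_{n}(\theta)$ and using $ds=R(\theta)\,d\theta>0$, the identity $\int_{\mathbb S^{1}}\cos\Phi_{n}(\theta)\,R(\theta)\,d\theta=0$ forces $\cos\Phi_{n}$ to change sign unless $\Phi_{n}\equiv\pi/2$. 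Hence each $B^{n}(\Gamma)$ is either equal to $\Gamma$ or genuinely crosses it (takes values on both sides of $\pi/2$); in particular it suffices to exclude the possibility that $B(\Gamma)$ crosses $\Gamma$.

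For the remaining step I would exploit the monotone twist structure. A useful elementary fact is that $B$, being an orientation-preserving homeomorphism preserving the two ends $\{\phi=0\}$ and $\{\phi=\pi\}$, sends disjoint essential graphs to disjoint essential graphs in the same vertical order, so graphs that are images of ordered graphs cannot cross. Assume for contradiction that $\gamma_{1}:=B(\Gamma)\ne\Gamma$, so by the previous paragraph $\gamma_{1}$ is a non-constant graph crossing $\Gamma$. The strict twist means that along $\gamma_{1}$ the arcs lying above $\Gamma$ are advanced in $\theta$ at a strictly greater rate than the arcs lying below, and this differential shear accumulates under iteration. Because both the forward iterates $B^{n}(\gamma_{1})=\gamma_{n+1}$ and the backward iterates $B^{-n}(\gamma_{1})=\gamma_{1-n}$ must remain graphs, the accumulated shear in one of the two time directions is forced to render the $\theta$-projection of $B^{\pm n}|_{\gamma_{1}}$ non-monotone, producing a vertical tangency and contradicting the graph hypothesis. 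The mechanism is transparent for the model twist map $F(\theta,I)=(\theta+I,I)$: a graph $I=u(\theta)$ keeps $F^{\pm n}$ a graph for all $n$ precisely when $u'\equiv 0$, i.e. when it is horizontal; here the two-sided requirement plays the same all-or-nothing role. Thus $\gamma_{1}$ cannot cross $\Gamma$, and combined with exactness this gives $B(\Gamma)=\Gamma$.

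The main obstacle is exactly this last quantitative folding step: the twist condition only provides an infinitesimal lower bound for the relative $\theta$-shear, while the orbits along which one must integrate this shear themselves drift in the $\phi$-direction, so one cannot simply sum a fixed shear. I would control this either by a direct estimate, using the uniform strict-twist constant of $B$ together with the compactness of $\Gamma$ to bound from below the per-step spreading of the above/below arcs of $\gamma_{1}$ until monotonicity of the projection fails, or, more softly, by appealing to Birkhoff's theory of monotone twist maps, in which a two-sided orbit of essential graphs without folding confines an invariant region and forces an invariant graph through $\Gamma$, necessarily $\Gamma$ itself. Once $B(\Gamma)=\Gamma$ is established, the equivalence recalled in the first paragraph shows the boundary is a curve of constant width, completing the proof.
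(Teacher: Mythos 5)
Your first two steps are sound and are genuinely different in flavour from the paper's argument: the observation that $B^{-n}(\Gamma)=\tau\bigl(B^{n}(\Gamma)\bigr)$ makes the hypothesis two-sided, and the exactness computation $\oint_{B^{n}(\Gamma)}\cos\phi\,ds=0$ correctly forces each iterate either to equal $\Gamma$ or to cross it. The problem is that the entire burden of the proof then falls on your third step, and that step is not actually proved. The assertion that a non-constant graph crossing $\Gamma$ cannot have all of its forward and backward images remain graphs is exactly the content of the proposition in disguise, and the twist-shear heuristic you offer does not establish it: as you yourself note, the per-step shear must be integrated along orbits that drift in $\phi$, the twist constant only controls $\partial\theta_1/\partial\phi$ pointwise, and the model computation for $F(\theta,I)=(\theta+I,I)$ uses the fact that $I$ is an integral of motion, which is precisely what fails here. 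The alternative appeal to ``Birkhoff's theory'' is also not a proof: Birkhoff's graph theorem produces a Lipschitz graph as the boundary of an invariant open set, but you have exhibited no invariant open set, and even if some invariant essential graph meeting $\Gamma$ were produced, nothing forces it to be $\Gamma$ itself, so the conclusion $B(\Gamma)=\Gamma$ would not follow. Flagging the obstacle and proposing two unexecuted routes around it leaves the proof incomplete at its only nontrivial point.

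The paper closes this gap by a softer mechanism that avoids estimating shear altogether. Under the graph hypothesis, the earlier analysis of the maps $A_{j}=\pi_{1}\circ B^{j}|_{\Gamma}$ shows that any point of $NP(2n)$ with $n>1$ forces some $A_{j}$ to have a critical point, i.e.\ some $B^{j}(\Gamma)$ to fail to be a graph; hence $B^{n}(\Gamma)\cap\Gamma$ can consist only of $NP(2)$ points. Assuming $B(\Gamma)\neq\Gamma$, one takes a maximal arc $I=[\sigma_{0},\sigma_{1}]\subset\Gamma$ between consecutive period-two points, uses the $\tau$-symmetry to see that $B^{2}(I)\cup\tau\bigl(B^{2}(I)\bigr)$ bounds a $B^{2}$-invariant disc containing $I$, and then invokes area preservation (a Poincar\'e-recurrence style return) to force $B^{2}(I)$ to meet the interior of $I$, producing a point of $NP(2n)$ with $n>1$ and hence the desired contradiction. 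If you want to salvage your write-up, replacing your third step by this area-preservation argument (your exactness identity can serve as the crossing input) is the natural repair.
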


\begin{proof}
The hypothesis gives $B^{n}(\Gamma) \cap \Gamma=NP(2)$. Let us asumes, by contradiction, that $B(\Gamma) \neq \Gamma$. Let us consider the curve
$B^{2}(\Gamma)$, which by hypothesis is  a graph over $\Gamma$.
By denoting $I=[\sigma_{0},\sigma_{1}] \subset \Gamma$ the maximal interval between two distinct period two points such that $B^{2}(I) \cap I=\{ \sigma_{0}, \sigma_{1} \}$, we have that  the curve $B^{2}(I) \cup R\circ B^{2}(I) $ bounds a disc which is invariant under $B^{2}$ and contains $I$. Since $B^{2}$ is area-preserving, there is an intersection between $B^{2}(I)$ and the interior of $I$ which is  a point in $NP(2n)$, with $n>1$. This is contradiction.
\end{proof}

We now describe the geometric conditions for the existence of normal periodic points in terms of envelope of families of rays.

First we give some estimates:
\begin{remark} The evolution of  {\it Chords}:

Let $v_{\phi}=\cos(\phi)T(\theta_{0})+\sin(\phi)N(\theta_{0})$ ,$ 0 \leq \phi \leq \frac{\pi}{2}$ and  $X(\theta_{1})=h'(\theta_{1})T(\theta_{1})-h(\theta_{1})N(\theta_{1})$ with $ [X(\theta_{1})-X(\theta_{0})] \wedge v_{\phi} =0$, then

$\langle X(\theta_{1})-X(\theta_{0}),v_{\phi} \rangle=h'(\theta_{1}) \cos(\theta_{1}-\theta_{0}-\phi)+h(\theta_{1})\sin(\theta_{1}-\theta_{0}-\phi)-\cos(\phi)h'(\theta_{0})+\sin(\phi)
h(\theta_{0})$.

 Integration by parts gives:

$\int_{\theta_{0}}^{\theta_{1}} \cos(\theta-\theta_{0} -\phi) R(\theta)d\theta=\int_{\theta_{0}}^{\theta_{1}} \cos(\theta-\theta_{0} -\phi)[h''(\theta)+h(\theta)]d\theta=$

$=\langle X(\theta_{1})-X(\theta_{0}),v_{\phi} \rangle= \Vert X(\theta_{1})-X(\theta_{0}) \Vert$ the length of the chord.


 Observe that for $\theta_{1}=\theta_{0}+ \pi$, $\phi=\frac{\pi}{2}$ we obtain the {\it width} $\omega(\theta_{0})=h(\theta_{0}+\pi)+h(\theta_{0})$. Here we generalize  formula proved in the  Lemma 3.9 of \cite{chineses} for the width.

By following  that proof, which we reproduce here, for the sake of completeness, we obtain an estimate for the chord in terms of the {\it entropy} of the curve, ${\cal E}=\frac{1}{2\pi}\int_{0}^{2\pi} \log k(\theta) d\theta$, thus  obtaining the following estimate: 

Claim: There is a constant $C$,  such that $L(\theta_{0},\phi) \geq C e^{{-\cal E}(X)}$.

\begin{proof}

Indeed, by Jensen's inequality, 
$\log L(\theta_{0},\phi) =\log \int_{\theta_{0}}^{\theta_{1}} \vert \cos(\theta-\theta_{0} -\phi) \vert  R(\theta)d\theta $

$\geq \frac{1}{\theta_{1}-\theta_{0}}\int_{\theta_{0}}^{\theta_{1}} \log \vert \cos(\theta-\theta_{0} -\phi)\vert  d\theta -\frac{1}{\theta_{1}-\theta_{0}}\int_{\theta_{0}}^{\theta_{1}}\log[k(\theta)]d\theta-\frac{1}{\theta_{1}-\theta_{0}} \log(\theta_{1}-\theta_{0})$.

Analogously, by integrating from $\theta_{1}$ to $\theta_{0}+ 2\pi$ we obtain:

$\Vert X(\theta_{1})-X(\theta_{0}) \Vert =\int_{\theta_{1}}^{\theta_{0}+ 2\pi} \cos(\theta-\theta_{1} -\phi_{1}) R(\theta)d\theta$,
for $v_{\phi_{1}}=v_{\phi+\pi}$.

$\log L(\theta_{1},\phi+\pi) =\log \int_{\theta_{1}}^{\theta_{0}+2\pi} \vert \cos(\theta-\theta_{1} +\phi+\pi) \vert  R(\theta)d\theta $

$\geq \frac{1}{\theta_{0}+2\pi-\theta_{1}}\int_{\theta_{1}}^{\theta_{0}+2\pi} \log \vert \cos(\theta-\theta_{1} +\phi)\vert  d\theta -\frac{1}{\theta_{0}+2\pi-\theta_{1}}$$\int_{\theta_{1}}^{\theta_{0}+2\pi}\log[k(\theta)]d\theta-$

$\frac{1}{\theta_{0}+2\pi-\theta_{1}} \log(\theta_{0}+2\pi-\theta_{1})$.

 By adding the two estimates we obtain:

$2\log[\Vert X(\theta_{1})-X(\theta_{0}) \Vert \geq C_{0}- \frac{2}{\pi}{\cal E}(X) $, $C_{0}$ independent of the curve.
\end{proof}

The uniform boundedness from below of the chord follows from the fact (Gage-Hamilton \cite{GH} or Lemma 3.8  of \cite{chineses}) that the entropy of the curve $X(.,t)$ {\it decreases monotonically} along the normalized curvature flow, that is  $\frac{d}{dt} {\cal E}(X(.,t) <0$ and the lower boundedness of the curvature for the normalized flow.

\end{remark}

For $n=1$, we consider the intersection between the normal ray and the curve. This intersection is the point $\theta_1$  that satisfies :  $\langle X(t,\theta_{1})-X(t,\theta), T(\theta)\rangle=0$.

By convexity, $\frac{\partial}{\partial \theta_{1}} \langle X(\theta_{1})-X(\theta), T(\theta)\rangle=R(\theta_{1})\langle T(\theta_{1}),T(\theta)\rangle \neq 0$. 
So this equation defines implicitly  a $C^{r}$ function $A_{1}: {\mathbb S}^1 \rightarrow  {\mathbb S}^1$ by $\theta_{1}=A_{1}(\theta)=\pi_{1}\circ B$. 

\begin{proposition}
 $A_{1}$ is a diffeomorphism if and only if the evolute of  $X$ is contained in the region bounded by $X$.
\end{proposition}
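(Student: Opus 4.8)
The plan is to reduce the statement to a sign analysis of $A_1'(\theta)$, obtained by implicit differentiation of the defining relation $G(\theta,\theta_1):=\langle X(\theta_1)-X(\theta),T(\theta)\rangle=0$, and then to read the resulting sign off geometrically in terms of where the centre of curvature $E(\theta)=X(\theta)+R(\theta)N(\theta)$ sits on the normal chord. First I would record the two partial derivatives of $G$. Using $X'(\theta)=R(\theta)T(\theta)$ (which follows from $X=h'T-hN$ together with $h''+h=R$) and the Frenet relations $T'=N$, $N'=-T$, one gets $\partial G/\partial\theta_1=R(\theta_1)\langle T(\theta_1),T(\theta)\rangle$ and $\partial G/\partial\theta=-R(\theta)+\langle X(\theta_1)-X(\theta),N(\theta)\rangle$. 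Since $G=0$ forces the chord $X(\theta_1)-X(\theta)$ to be parallel to $N(\theta)$, writing $\rho(\theta):=\langle X(\theta_1)-X(\theta),N(\theta)\rangle>0$ for its (positive, inward) length, the implicit function theorem yields
\[
A_1'(\theta)=\frac{R(\theta)-\rho(\theta)}{R(A_1(\theta))\,\langle T(A_1(\theta)),T(\theta)\rangle}.
\]

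Next I would pin down the sign of the denominator. As $R>0$ everywhere on a convex curve, the only issue is the factor $\langle T(\theta_1),T(\theta)\rangle=\cos(\theta_1-\theta)$, which vanishes exactly when the normal chord is tangent to $X$ at $\theta_1$. Strict convexity forbids this, since a secant of a strictly convex curve meets it transversally at both endpoints; this is precisely the nonvanishing already invoked just before the Proposition. Being continuous and nowhere zero on $S^1$, the quantity $\langle T(\theta_1),T(\theta)\rangle$ has constant sign, and comparison with the circle (where $\theta_1=\theta+\pi$ and the inner product is $-1$) shows that sign is negative. Consequently $A_1'(\theta)$ has the same sign as $\rho(\theta)-R(\theta)$ and vanishes precisely when $R(\theta)=\rho(\theta)$.

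Now comes the geometric translation together with a degree argument that closes both implications. On the inward normal ray from $X(\theta)$ the interior of the region bounded by $X$ is exactly the open segment from $X(\theta)$ to $X(\theta_1)$, of length $\rho(\theta)$, while $E(\theta)$ sits on that ray at distance $R(\theta)$; hence $E(\theta)$ lies inside the region iff $R(\theta)<\rho(\theta)$, i.e. iff $A_1'(\theta)>0$. Thus ``the evolute is contained in the region bounded by $X$'' is equivalent to $A_1'(\theta)>0$ for all $\theta$. Finally, $A_1$ is a smooth self-map of $S^1$ that is always well defined (a line meets a strictly convex closed curve in at most two points) and is homotopic to $\theta\mapsto\theta+\pi$ — for instance through the curvature flow, which contracts $X$ to a circle without ever rendering $A_1$ undefined — so $\deg A_1=1$ and $\oint A_1'\,d\theta=2\pi$. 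If the evolute is inside everywhere then $A_1'>0$ throughout, so $A_1$ is an orientation-preserving degree-one local diffeomorphism of $S^1$, hence a diffeomorphism; conversely, if $A_1$ is a diffeomorphism then $A_1'$ is nowhere zero and of constant sign, which the relation $\oint A_1'=2\pi>0$ forces to be positive, giving $\rho>R$ everywhere, i.e. the evolute inside.

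The step I expect to be the main obstacle is the sign bookkeeping: establishing that the denominator $\langle T(\theta_1),T(\theta)\rangle$ keeps a constant negative sign (resting on the transversality consequence of strict convexity and on continuity across $S^1$), and then using the degree-one property to upgrade ``nonvanishing derivative'' to ``positive derivative'', which is what makes the equivalence work in both directions. A minor borderline case — $E(\theta)$ merely touching $X$, where $R=\rho$ and $A_1$ acquires a degenerate critical point — is exactly where $A_1$ is a homeomorphism but not a diffeomorphism, consistent with reading ``contained in the region bounded by $X$'' as the open interior.
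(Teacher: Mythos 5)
Your proof is correct and follows essentially the same route as the paper: implicit differentiation of $\langle X(\theta_1)-X(\theta),T(\theta)\rangle=0$ yields the same formula $A_1'(\theta)=-\frac{\ell(t,\theta)-R(t,\theta)}{R(A_1)\langle T(A_1),T(\theta)\rangle}$, and the critical points of $A_1$ are identified with the points where the evolute $X+RN$ meets the boundary. You merely make explicit the sign of the denominator and the degree-one argument, details the paper's two-line proof leaves implicit.
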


\begin{proof}

Indeed, $A_{1}'(\theta) = - \frac{\ell(t, \theta)-R(t, \theta)}{R(A_1)\langle T(A_1),T(\theta)\rangle}$. Hence $A_{1}'(\theta)=0$ if and only if   $\ell(t, \theta)-R(t, \theta) =0$. 

The statement follows from the fact that  $X(\theta)+ R(\theta)N(\theta)$ is the equation of the evolute of $X$.

\end{proof}
Observe that $Fix(A_{1}) = \emptyset$ and the period two orbits of $A_{1}$ correspond exactly to $NP(2)$, period two orbits of the billiard map.

\begin{proposition}
$A_{1}(t, \theta)$  converges uniformly as $t \rightarrow \infty$ to $\pi +\theta$.(antipodal function)
\end{proposition}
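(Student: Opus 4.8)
The plan is to reduce the statement to the uniform decay of the period-two defining function $f(t,\theta)=-[h'(t,\theta+\pi)+h'(t,\theta)]$ already analyzed above. First I would rewrite the equation defining $A_1$ in terms of the support function. Using $X(t,\theta)=h'(t,\theta)T(\theta)-h(t,\theta)N(\theta)$ together with $\langle T(\theta_1),T(\theta)\rangle=\cos(\theta_1-\theta)$ and $\langle N(\theta_1),T(\theta)\rangle=-\sin(\theta_1-\theta)$, the implicit relation $G(t,\theta,\theta_1):=\langle X(t,\theta_1)-X(t,\theta),T(\theta)\rangle=0$ becomes
$$G(t,\theta,\theta_1)=h'(t,\theta_1)\cos(\theta_1-\theta)+h(t,\theta_1)\sin(\theta_1-\theta)-h'(t,\theta).$$
The key elementary observation is that evaluating at the antipodal value $\theta_1=\theta+\pi$ gives exactly
$$G(t,\theta,\theta+\pi)=-[h'(t,\theta+\pi)+h'(t,\theta)]=f(t,\theta),$$
so $\theta+\pi$ is an exact solution precisely when $\theta$ is a period-two orbit. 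Thus the proposition is equivalent to showing that, although $\theta+\pi$ need not solve $G=0$ for finite $t$, it is an approximate solution whose error tends to zero uniformly.

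Second, I would prove that $f(t,\cdot)\to 0$ uniformly. This follows at once from the integral formula coming from (\ref{xxx}),
$$f(t,\theta)=e^{t_0-t}f(t_{0},\theta)+e^{t_0-t}\int_{t_{0}}^{t}e^{s}\,[k'(s,\theta+\pi)+k'(s,\theta)]\,ds,$$
combined with the exponential bound $\Vert k'\Vert_{\infty}\le C(1)e^{-2\alpha t}$ for the derivatives of the curvature established above: the first term decays like $e^{-t}$, and choosing $\alpha\in(0,\tfrac12)$ the second is bounded by $\tfrac{2C(1)}{1-2\alpha}\,e^{-2\alpha t}$. Hence $\Vert f(t,\cdot)\Vert_{\infty}\to 0$, in fact exponentially.

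Third, I would control the inversion near the antipode. From the formula established above, $\partial_{\theta_1}G=R(t,\theta_1)\langle T(\theta_1),T(\theta)\rangle=R(t,\theta_1)\cos(\theta_1-\theta)$, which at $\theta_1=\theta+\pi$ equals $-R(t,\theta+\pi)$. Since $R(t,\cdot)\to 1$ uniformly, there are $T_{0}$ and $\eta>0$ such that $\partial_{\theta_1}G\le-\tfrac12$ for all $t>T_{0}$ and all $\theta_1$ with $|\theta_1-(\theta+\pi)|\le\eta$. Once $t$ is large enough that $\Vert f(t,\cdot)\Vert_\infty<\eta/2$, monotonicity forces $G(t,\theta,\theta+\pi-\eta)>0>G(t,\theta,\theta+\pi+\eta)$, so $G$ has a zero in $(\theta+\pi-\eta,\theta+\pi+\eta)$. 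By convexity the normal line through $X(t,\theta)$ meets the curve only at $X(t,\theta)$ and one further point, so apart from the trivial root $\theta_1=\theta$ the equation $G=0$ has a single solution, which is $A_1(t,\theta)$; the sign change above places it in that interval. The mean value theorem then yields
$$A_1(t,\theta)-(\theta+\pi)=-\frac{f(t,\theta)}{\partial_{\theta_1}G(t,\theta,\xi)},\qquad |A_1(t,\theta)-(\theta+\pi)|\le 2\,|f(t,\theta)|,$$
and combining with the second step gives $\Vert A_1(t,\cdot)-(\cdot+\pi)\Vert_{\infty}\le 2\Vert f(t,\cdot)\Vert_{\infty}\to 0$ at the rate $O(e^{-2\alpha t})$.

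The main obstacle is the global-to-local reduction in the third step: a priori the normal ray could meet the curve far from the antipode, so one must combine the global uniqueness of the non-trivial intersection (from strict convexity) with the uniform non-degeneracy of $\partial_{\theta_1}G$ near $\theta+\pi$ to be certain the genuine intersection is the near-antipodal zero rather than a spurious one. Once this localization is secured, everything else is a routine application of the implicit function theorem together with the uniform convergences $h\to1$, $R\to1$ and the decay of $f$.
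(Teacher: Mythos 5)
Your proof is correct, and it takes a genuinely different route from the paper's. The paper disposes of the $C^0$ statement in one line (``follows from the uniform convergence of $k$ and $h$ to $1$'') and then devotes the actual computation to the derivative: using the billiard Jacobian it writes $A_1'(\theta)=\frac{x_0-l_0}{x_1}$, expresses the chord length as $l_0=\int_{\theta}^{A_1}\sin(\xi-\theta)R(t,\xi)\,d\xi$, integrates by parts, and deduces the exponential decay of $\vert A_1'(t,\theta)-1\vert$ from the decay of $R'$ --- which is the $C^1$ information really needed for the subsequent corollary that $A_1$ is eventually a diffeomorphism and $NP(4)=\emptyset$. You instead stay at the $C^0$ level: you observe that the defining function $G(t,\theta,\theta_1)$ evaluated at the antipode equals exactly the period-two function $f(t,\theta)=-[h'(t,\theta+\pi)+h'(t,\theta)]$, prove $\Vert f(t,\cdot)\Vert_\infty\to 0$ from the integral formula (\ref{xxx}) and the decay of $k'$, and convert this into $\vert A_1(t,\theta)-(\theta+\pi)\vert\le 2\vert f(t,\theta)\vert$ via the uniform bound $\partial_{\theta_1}G=R(t,\theta_1)\cos(\theta_1-\theta)\le -\tfrac12$ near the antipode, together with the strict-convexity argument that pins the genuine intersection to the near-antipodal zero. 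Your localization step is exactly the point the paper glosses over, and it is handled correctly; what your argument buys is an explicit rate and a clean reduction to quantities already controlled in Section 3, while the paper's buys the derivative estimate that the Proposition's statement does not actually require but the following Corollary does. The only cosmetic slip is a missing constant factor $e^{t_0}$ in your bound for the integral term, which is immaterial.
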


\begin{proof}
This  follows from the uniform converge of curvature $k$ and the support function $h$ to 1. 

Recall that, denoting $x_{0}=R(\theta)$, $x_{1}=R(A_{1}(t,\theta)) \sin(\phi_{1}(t,\theta))$, $\phi_{1}$ is the reflected angle with respect to the tangent $T(A_{1}(t,\theta))$, so

 \[
\left(
\begin{array}{c}
A'_{1}(\theta)\\
\phi_{1}'(\theta)
\end{array}
\right)=\frac{1}{x_{1}}\left( 
\begin{array}{c}
x_{0}-l_{0}  \\ 
l_{0}-x_{0}-x_{1}   %
\end{array}%
\right)
\]

$l_{0}=\Vert X(A_{1}(t,\theta))-X(t,\theta) \Vert=\int_{\theta}^{A_{1}(t,\theta)} \sin(\xi -\theta)R(t,\xi) d\xi=$

$-\cos[A_{1}(t,\theta)-\theta]R(A_{1}(t,\theta))+R(t,\theta)+\int_{\theta}^{A_{1}((t,\theta)} \cos(\xi -\theta)R'(t,\xi) d\xi.$

Since $\cos[A_{1}(t,\theta)-\sin(\theta)]=\sin(\phi_{1}(t,\theta))$, we obtain 

$\frac{x_{0}-l_{0}}{x_{1}}=1-\frac{1}{x_{1}} \int_{\theta}^{A_{1}(t,\theta)} \cos(\xi -\theta)R'(t,\xi) d\xi.$

Using the uniform exponential decay of $R'$ we obtain the exponential decay of $\vert  A'_{1}(t,\theta)-1 \vert$, as $t \rightarrow \infty$, since $\frac{1}{x_{1}}$ is bounded.

Moreover, $\phi_{1}'(t,\theta)=1-A'_{1}(t,\theta)$ implies the exponential decay to 0 of this term.




\end{proof}


\begin{corollary}
For every  $t$ sufficiently large $NP(4)=\emptyset$.
\end{corollary}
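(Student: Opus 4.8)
The plan is to read the statement off from the injectivity of the normal-ray map $A_1(t,\cdot)$, which the previous proposition makes available for large $t$. That proposition gives that $A_1(t,\theta)$ converges uniformly to $\pi+\theta$ and that $|A_1'(t,\theta)-1|$ decays exponentially, uniformly in $\theta$. So I would first fix $T$ so large that $A_1'(t,\theta)>0$ for all $\theta\in\mathbb{S}^1$ and all $t>T$. Passing to a lift $\widetilde{A_1}(t,\cdot)\colon\mathbb{R}\to\mathbb{R}$ of degree one (the degree is $1$ because $A_1(t,\cdot)$ is homotopic to the antipodal map to which it converges), the bound $\widetilde{A_1}'>0$ forces $\widetilde{A_1}(t,\cdot)$ to be strictly increasing; hence $A_1(t,\cdot)$ is an orientation-preserving $C^r$ diffeomorphism of $\mathbb{S}^1$, and in particular injective, for every $t>T$.

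Next I would show that a nonempty $NP(4)$ is incompatible with injectivity of $A_1(t,\cdot)$. Suppose $\theta_0\neq\theta_1$ with $B^2(\theta_0,\frac{\pi}{2})=(\theta_1,\frac{\pi}{2})$, and let $\mu=\pi_1\circ B(\theta_0,\frac{\pi}{2})$ be the intermediate reflection point. By the very definition of $A_1$ as $\pi_1\circ B$ on $\Gamma$ we have $\mu=A_1(t,\theta_0)$. On the other hand, since $B^2(\theta_0,\frac{\pi}{2})=(\theta_1,\frac{\pi}{2})\in\Gamma$, the trajectory arrives at $X(t,\theta_1)$ orthogonally, so the chord from $X(t,\mu)$ to $X(t,\theta_1)$ lies along the inward normal at $\theta_1$; equivalently, the normal ray issued from $\theta_1$ passes through the boundary point $X(t,\mu)$, whence $A_1(t,\theta_1)=\mu$. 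Thus $A_1(t,\theta_0)=\mu=A_1(t,\theta_1)$ with $\theta_0\neq\theta_1$, contradicting injectivity. Consequently $NP(4)=\emptyset$ for all $t>T$. Note that this argument never invokes the reflection law at $\mu$: a single necessary condition, namely that both endpoints send their normal rays to the common intermediate point, already suffices to produce the contradiction.

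The delicate step, and the only place where genuine care is needed, is the identity $A_1(t,\theta_1)=\mu$: I must exclude the possibility that the inward normal ray from $\theta_1$ is intercepted by the boundary at a point other than the $C^r$ branch singled out by the implicit-function definition of $A_1$. This is exactly where the near-circularity of the flow enters: since $k(t,\cdot)$ and $h(t,\cdot)$ converge uniformly to $1$, for $t$ large each inward normal chord meets the oval in a unique interior-normal intersection, so the geometric intermediate point $\mu$ must coincide with the analytic value $A_1(t,\theta_1)$. The remaining ingredients—positivity of $A_1'$ and the degree-one lift argument—are immediate once the previous proposition is invoked.
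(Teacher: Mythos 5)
Your argument is correct and its overall shape matches the paper's: a nontrivial point of $NP(4)$ forces $A_1(t,\theta_0)=A_1(t,\theta_1)$ with $\theta_0\neq\theta_1$, hence non-injectivity of $A_1(t,\cdot)$, which is then played off against the fact that $A_1(t,\cdot)$ is a diffeomorphism for large $t$. The only real divergence is in how that last fact is obtained: the paper derives it geometrically, using the earlier criterion that $A_1$ is a diffeomorphism iff the evolute lies inside the region bounded by $X$, together with the bound $\ell_E(\theta_a,\theta_b)\leq|R'(\zeta)||\theta_a-\theta_b|$ and the exponential decay of $R'$ to show the evolute is eventually swallowed by $U(t)$; you instead read it directly off the estimate $|A_1'(t,\theta)-1|\to 0$ established in the proof of the preceding proposition, plus a degree-one lift argument. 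Both routes ultimately rest on the Gage--Hamilton decay of the curvature derivatives; the paper's version buys the geometric statement that the orthogonal wave front focalizes strictly inside $U(t)$ (which it reuses for higher periods), while yours is shorter and avoids re-invoking the evolute. Your worry about which branch the implicit-function definition of $A_1$ selects is harmlessly overcautious: for any strictly convex curve the normal line at $\theta_1$ meets the boundary in exactly two points, so $A_1(t,\theta_1)=\mu$ without needing near-circularity.
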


\begin{proof} The existence of a point in $NP(4)$, which is not in a period-two orbit, implies the existence of a critical pointof $A_{1}$. 
Indeed,   if $B^{2}(\theta_{0},\frac{\pi}{2})=(\theta_{2},\frac{\pi}{2})$, for $\theta_{2} \neq \theta_{0})$, then
 $B(\theta_{0},\frac{\pi}{2})=\tau \circ B(\theta_{2},\frac{\pi}{2})$. That is: $A_{1}(\theta_{0})=A_{1}(\theta_{2})$ which implies the existence of a critical point $\zeta$, $A'_{1}(\zeta)=0$.

The length of a regular arc of the evolute is the difference of the radius of curvature, 
$\ell_{E}(\theta_{a},\theta_{b})=\vert R(\theta_{a})-R(\theta_{b}) \vert \leq \vert  R'(\zeta) \vert \vert \theta_{a}-\theta_{b} \vert$. Since the point of the evolute corresponding to the vertice of maximal curvature is always inside the region bounded by the curve $X$, the uniform exponential decay of $R'$ imply that for $t$ large enough, the evolute of the curve $X(t,\theta)$ is entirely contained in the region bounded by $X$.

Thus, concluding that the orthogonal wave front focalizes in the interior of the plane region $U(t)$.  In this case $A_{1}$ is a diffeomorphism and $NP(4)=\emptyset$.

\end{proof}

We now proceed for higher periods,$n>2$ to show that for $t>0$ large enough $NP(2n)=\emptyset$.

Let $  B^{m}(\theta, \frac{\pi}{2})=(A_{m}(\theta),\phi_{m}(\theta))$ and denote $x_{m}=R(A_{m}(\theta)) \sin(\phi_{m}(\theta))$, $l_{m}=\Vert X(A_{m+1})-X(A_{m})\Vert$.

\[
DB(A_{m}(\theta),\phi_{m}(\theta))=\frac{1}{x_{m+1}}\left( 
\begin{array}{cc}
l_{m}-x_{m} & l_{m} \\ 
l_{m}-x_{m}-x_{m+1} & l_{m}-x_{m+1}  %
\end{array}%
\right) 
\]

By induction,

\[
\left(
\begin{array}{c}
A'_{m+1}(\theta)\\
\phi_{m+1}'(\theta)
\end{array}
\right)=\frac{1}{x_{m+1}}\left( 
\begin{array}{cc}
l_{m}-x_{m} & l_{m} \\ 
l_{m}-x_{m}-x_{m+1} & l_{m}-x_{m+1}  %
\end{array}%
\right) \left(
\begin{array}{c}
A'_{m}(\theta)\\
\phi_{m}'(\theta)
\end{array}
\right)
\]

Therefore, $A'_{m+1}(\theta)=0$, for $m\geq 1$ if and only if $l_{m}=\frac{x_{m}}{A'_{m}(\theta)+\phi_{m}'(\theta)} A'_{m}(\theta)$.

As in the case $n=1$ we stablish the relatioshionship between the critical points of $A_{m+1}$ and envelopes.

 The reflected angle at $X(t,A_{m}(t,\theta))$, denoted  by $\phi_m(t,\theta)$, defines a wave front
$$\upsilon_{\phi_m(t,\theta)}= \cos (\phi_m(t,\theta)) T(A_{m}(t,\theta))+\sin (\phi_m(t,\theta))N(A_{m}(t,\theta))$$

This  wave front  focalizes in the envelope  of the family of lines $X(A_{m}(t,\theta))+ \lambda v_{\phi_{m}(\theta)}$, which is  defined by $E'_{m}(\theta) \wedge \upsilon_{\phi_m(t,\theta)}=0$.
Equivalently, for $x_{m}=R(t,A_{m}(\theta))\sin(\phi_{m}(t,\theta)) $,

$$E_{m}(t,\theta) =X(A_{m}(t,\theta))+ \frac {x_{m}}{ A'_{m}(t,\theta)+\phi'_{m}(t,\theta)} A'_{m}(t,\theta) \upsilon_{\phi_{m}(t,\theta)}.$$

We conclude that if $A'_{m}(t,\theta)=0$ with $A'_{j}(t,\theta)=0 for j=1,...,m-1$   if and only if the $E_{m}(t,\theta)=X(A_{m+1}(t,\theta))$ an  intersection point between the envelope and the curve.

Therefore the inexistence of normal periodic point is connected with the fact the the envelope is entirely contained in the region bounded by the curve $X$.

\begin{proposition}
For $t$ sufficiently large, the envelope $E_{m}$ is entirely contained in the interior of the region $U(t)$ bounded by the curve $X(t,.)$.

\end{proposition}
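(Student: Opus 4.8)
The plan is to prove that, for each fixed $m$ in the relevant finite range $1\le m\le N-1$, the entire envelope curve $\theta\mapsto E_m(t,\theta)$ collapses uniformly onto the origin $O$ as $t\to\infty$. Since $h\to 1$ uniformly (as established in the Remark), the boundary $X(t,\cdot)$ stays uniformly close to the unit circle centered at $O$, so $O$ lies in the interior of $U(t)$ and the collapse forces $E_m$ into that interior for all $t$ larger than some threshold $T_m$; the $T$ of the main theorem is then $\max_m T_m$ over the finitely many relevant $m$. This is exactly the mechanism already established for $m=1$, where $E_1$ is the evolute and the Corollary shows it is eventually swallowed by $U(t)$; the task is to propagate that argument through $m$ reflections.

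First I would establish, by induction on $m$ with the proposition giving $A_1(t,\theta)\to\pi+\theta$ as base case, the uniform-in-$\theta$ limits $A_m(t,\theta)\to\theta+m\pi$, $\phi_m(t,\theta)\to\frac{\pi}{2}$, $A'_m(t,\theta)\to 1$ and $\phi'_m(t,\theta)\to 0$, all with exponential rate. The inputs are $R\to 1$ uniformly and $R'\to 0$ uniformly (the latter from $R'=-k'/k^2$ together with the uniform exponential decay of the curvature derivatives). These give $l_m\to 2$ and $x_m=R(A_m)\sin\phi_m\to 1$ uniformly, so the Jacobian matrix in the derivative recursion converges uniformly to the shear $\bigl(\begin{smallmatrix}1&2\\0&1\end{smallmatrix}\bigr)$, which fixes the vector $(1,0)$. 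Because the recursion is iterated only boundedly many times ($m\le N-1$), the exponentially small perturbations propagate with at most an $N$-dependent factor; in particular $A'_m+\phi'_m\to 1$ uniformly, hence stays bounded away from $0$ for large $t$, so the envelope formula is well defined and its scalar coefficient $\frac{x_m A'_m}{A'_m+\phi'_m}$ tends uniformly to $1$.

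Substituting these limits into the envelope formula, and using $\upsilon_{\phi_m}=\cos\phi_m\,T(A_m)+\sin\phi_m\,N(A_m)\to N(A_m)$ uniformly, gives $E_m(t,\theta)\to X(A_m)+N(A_m)$ uniformly. Finally, writing $X=h'T-hN$ yields $X(A_m)+N(A_m)=h'(A_m)T(A_m)+(1-h(A_m))N(A_m)$, which tends to $0$ uniformly since $h\to 1$ uniformly and $h'\to 0$ uniformly; the latter holds because $h''=R-h\to 0$ uniformly while $h'$ has zero mean on $\mathbb{S}^1$, which forces $h'$ itself to be uniformly small. Thus the image of $E_m(t,\cdot)$ converges in Hausdorff distance to $\{O\}$, whereas $X(t,\cdot)$ stays at distance close to $1$ from $O$, so $E_m$ is contained in the interior of $U(t)$ for all large $t$.

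The main obstacle is the inductive control of the derivatives, specifically keeping the denominator $A'_m+\phi'_m$ uniformly bounded away from zero --- equivalently, preventing the reflected wavefront from focusing degenerately --- while tracking the perturbation accumulated over $m$ reflections. The shear structure of the limiting Jacobian lets errors grow linearly in $m$; this is harmless because $m\le N$ is bounded, but it is exactly where the uniform-in-$\theta$ bookkeeping and the uniform exponential decay of $R'$ from the Gage--Hamilton estimate do the essential work.
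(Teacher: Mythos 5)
Your argument is correct and takes essentially the same route as the paper: the paper tracks the ratio $\beta_{m}=\phi_{m}'/A_{m}'$ through the recursion induced by $DB$ and shows $\beta_{m}\to 0$ from $l_{m}\to 2$, $x_{m}\to 1$, which is the same computation as your observation that the Jacobian converges to a shear fixing $(1,0)$, and then concludes from the envelope formula $E_{m}=X(A_{m})+\frac{x_{m}}{1+\beta_{m}}\upsilon_{\phi_{m}}$. Your version only spells out more explicitly the inductive bookkeeping and the final step that $X(A_{m})+N(A_{m})$ tends to the origin, which the paper leaves implicit.
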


\begin{proof}

If we let $\beta_{m}(t,\theta)= \frac{\phi_{m}'(t,\theta)}{A'_{m}(t,\theta)}$, then $$\beta_{m}(t,\theta)=1-x_{m} \frac{1-\beta_{m-1}(t,\theta)}{l_{m-1}-x_{m-1}-l_{m-1}\beta_{m-1}(t,\theta)}.$$

Since $lim_{t \rightarrow \infty} l_{m}=2$ and   $lim_{t \rightarrow \infty} x_{m}=1$ we obtain, by induction, $lim_{t\rightarrow \infty}\beta_{m}(t,\theta)=0$.

The conclusion follows from the expression 
$$E_{m}(t,\theta) =X(A_{m}(t,\theta))+ \frac {x_{m}}{ 1+ \beta_{m}(t,\theta)} \upsilon_{\phi_{m}(t,\theta)}.$$

\end{proof}

Let us assume that $NP(2n) \neq \emptyset$ we are going to show that there is $j$ such that $A_{j}$ is not a diffeomorphism.

There are two cases: $n=2k$ and $n=2k+1$.

For the case $n=2k$, we have  $B^{4k}(x)=x$ or $\tau \circ B^{2k} \circ B^{2k} (x)=x$, assume that $k$ is the smallest such integer.

Since $\tau \circ B \circ \tau = B^{-1}$ and $y=B^{2k}(x) \neq x$,  we have $\tau \circ B^{k}(B^{k}(x))=y$ or  $\tau (B^{k}(x))=B^{k}(y)$, that is, $A_{k}=p_{1} \circ B^{k}$ is not injective  so  there is a point $z \in\Gamma $ such that  $A'_{k}(z)=0$.

In the second case, $B^{4k+2}(x)=x$ or $B^{4k}(B^{2}(x))=x  \in \Gamma= Fix(\tau)$.

Then  $\tau \circ B^{2k} \circ B^{2k} (B^{2}(x))=x$ implies $\tau  \circ B^{2k}(B^{2}(x)=B^{2k}(x)$. Since $B^{2k}(B^{2}x)) \neq B^{2k}(x)$, this implies , the existence of a point $z$ such that $A'_{2k}(z)=0$.

\begin{theorem}
Given a positive integer $j$ there exists a positive number $t_j$ such that for all $t >t_j$  the function $\theta \mapsto A_{m}(t,\theta) $ is  a diffeomorphism for $m=1,2,\cdots, j$.
\end{theorem}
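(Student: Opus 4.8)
The plan is to prove, by induction on $m$, the stronger uniform statement that $(A'_m(t,\theta),\phi'_m(t,\theta)) \to (1,0)$ uniformly in $\theta$ as $t\to\infty$. Granting this for all $m\le j$, the theorem follows at once: since $A'_m(t,\cdot)\to 1$ uniformly, there is a single $t_j$ such that $A'_m(t,\theta)>0$ for all $\theta$, all $m\le j$, and all $t>t_j$ (only finitely many indices are involved). A $C^r$ self-map of $S^1$ with everywhere positive derivative is an immersion, hence a covering map; and because $A_m(t,\cdot)$ is uniformly close to the rotation $\theta\mapsto\theta+m\pi$ it is homotopic to it and therefore has degree one. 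A degree-one covering of $S^1$ is a diffeomorphism, which is the desired conclusion.

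The base case $m=1$ is furnished by the Proposition above, which gives $A'_1(t,\theta)\to 1$ and $\phi'_1(t,\theta)=1-A'_1(t,\theta)\to 0$ uniformly. For the inductive step I would insert the hypothesis into the derivative recursion
\[
\left(\begin{array}{c} A'_{m+1}\\ \phi'_{m+1}\end{array}\right)=\frac{1}{x_{m+1}}\left(\begin{array}{cc} l_m-x_m & l_m \\ l_m-x_m-x_{m+1} & l_m-x_{m+1}\end{array}\right)\left(\begin{array}{c} A'_m\\ \phi'_m\end{array}\right).
\]
The limits $l_m\to 2$ and $x_m\to 1$ (already recorded above, and uniform because $R(t,\cdot)\to 1$ uniformly while the positions $A_m$ and reflected angles $\phi_m$ converge uniformly to $\theta+m\pi$ and $\tfrac{\pi}{2}$) force the transition matrix to converge uniformly to $\left(\begin{smallmatrix}1&2\\0&1\end{smallmatrix}\right)$. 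The decisive fact is that this limit matrix fixes the vector $(1,0)$, i.e. $\left(\begin{smallmatrix}1&2\\0&1\end{smallmatrix}\right)(1,0)^{\top}=(1,0)^{\top}$; hence applying a matrix uniformly close to it to a vector uniformly close to $(1,0)$ again yields a vector uniformly close to $(1,0)$. This closes the induction.

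The step I expect to demand the most care is the bookkeeping of uniformity as it propagates through the recursion. Two items must be controlled uniformly in $\theta$ and simultaneously for all $m\le j$: first, the denominators $x_{m+1}=R(t,A_{m+1})\sin\phi_{m+1}$ must stay bounded away from $0$, which holds once $\phi_{m+1}\to\tfrac{\pi}{2}$ and $R\to 1$ uniformly so that $x_{m+1}\to 1$; second, the auxiliary position-level convergences (equivalently $l_m\to 2$, $x_m\to 1$) must be genuinely uniform, which I would obtain by iterating the uniform convergence $B(t,\cdot)\to B_\infty$ of the billiard map to the unit-circle billiard $B_\infty$ a fixed finite number of times. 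Because $j$ is fixed, only finitely many compositions and finitely many transition matrices enter, so no uniform-in-$m$ estimate is needed and the accumulated error is made small by taking $t_j$ large; in particular $t_j$ genuinely depends on $j$. Concretely I would record the elementary lemma that a product of at most $j$ matrices, each within $\varepsilon$ of $\left(\begin{smallmatrix}1&2\\0&1\end{smallmatrix}\right)$, applied to a vector within $\varepsilon$ of $(1,0)$, remains within $C_j\varepsilon$ of $(1,0)$, with $C_j$ depending only on $j$.
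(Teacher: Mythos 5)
Your proof is correct and follows essentially the same route as the paper's: an induction on $m$ showing $A'_m(t,\cdot)\to 1$ uniformly, driven by the derivative recursion whose coefficients satisfy $l_m\to 2$, $x_m\to 1$ by the uniform convergence of the flow to the unit circle. The paper phrases the inductive step as a second-order scalar recursion (the combination $2A'_j-A'_{j-1}\to 1$) rather than your first-order matrix form with limit $\left(\begin{smallmatrix}1&2\\0&1\end{smallmatrix}\right)$ fixing $(1,0)^{\top}$, but these are the same computation; your closing degree-one argument for why everywhere-positive derivative yields a circle diffeomorphism is a detail the paper leaves implicit.
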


\begin{corollary}
For all $t>t_j$, obtained in the previous theorem  there $NP(2n) =\emptyset$, $2 \leq n \leq 2j$ for the billiard maps associated to the curves $X(t,\theta)$. In particular, there is no invariant curve of period $2n$ surrounding an elliptic diameter.
\end{corollary}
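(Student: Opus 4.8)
The plan is to reduce the theorem to a single quantitative claim: that for each fixed $m$ the derivative $A'_m(t,\theta)$ converges to $1$ uniformly in $\theta$ as $t\to\infty$. Granting this, the conclusion follows by a degree argument. Each $A_m(t,\cdot)$ is a smooth self-map of the circle $\mathbb{S}^1$, and its degree is the integer $\frac{1}{2\pi}\int_0^{2\pi} A'_m(t,\theta)\,d\theta$. Once $A'_m(t,\cdot)$ is uniformly close to $1$, this integral is close to $2\pi$, so the degree equals $1$, and simultaneously $A'_m(t,\theta)>0$ for every $\theta$. An orientation-preserving immersion of $\mathbb{S}^1$ of degree one is a diffeomorphism (lift to $\mathbb{R}$: $A'_m>0$ gives a strictly increasing lift, and degree one makes it commute with the deck transformation $x\mapsto x+2\pi$, hence bijective). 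Thus each $A_m(t,\cdot)$ with $m\le j$ becomes a diffeomorphism once $t$ exceeds a threshold $t_m$; since only finitely many indices are involved, taking $t_j=\max\{t_1,\dots,t_j\}$ handles all $m\le j$ at once.

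I would establish the uniform estimate on $A'_m$ by induction on $m$. The base case $m=1$ is precisely the content of the Proposition showing $A'_1(t,\theta)\to 1$ and $\phi'_1(t,\theta)\to 0$ uniformly, via the uniform exponential decay of $R'$. For the inductive step I would use the derivative recursion already derived in the text, which after substituting $\phi'_m=\beta_m A'_m$ reads
\[
A'_{m+1}(t,\theta)=\frac{A'_m(t,\theta)}{x_{m+1}}\bigl[(l_m-x_m)+l_m\,\beta_m(t,\theta)\bigr].
\]
Into this I feed the three limits already recorded, $l_m\to 2$, $x_m\to 1$, and $\beta_m(t,\theta)\to 0$ (the last from the Proposition on the envelope), all uniform in $\theta$, together with the inductive hypothesis $A'_m\to 1$. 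These give $(l_m-x_m)\to 1$ and $l_m\beta_m\to 0$, so the bracket tends to $1$ and the prefactor $A'_m/x_{m+1}$ tends to $1$; hence $A'_{m+1}(t,\theta)\to 1$ uniformly. The induction also guarantees $A'_m\neq 0$ for large $t$, which is exactly what legitimizes $\beta_m=\phi'_m/A'_m$ at the next stage.

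The main obstacle is bookkeeping the \emph{uniformity} in $\theta$ through the recursion: each of $l_m$, $x_m=R(A_m)\sin\phi_m$, and $\beta_m$ depends on the previously computed $A_m,\phi_m$, so one must ensure the convergences are uniform rather than merely pointwise before composing. This ultimately rests on Theorem 2.4, the uniform exponential decay $\Vert k^{(n)}\Vert_\infty\le C(n)e^{-2\alpha t}$ together with the uniform convergence $k,R,h\to 1$, which propagates through the finitely many composition steps $m=1,\dots,j$ without degradation; because $j$ is fixed, no question of uniformity in $m$ arises. A secondary point to verify carefully is that $\phi_m(t,\theta)\to\frac{\pi}{2}$ uniformly, so that $x_m\to 1$; this follows from the same wave-front estimate used for $A_1$, controlling the accumulated deviation of the reflection angles by the exponentially small integrals of $R'$ supplied by Theorem 2.4.

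Finally, once the theorem is in hand, the Corollary is immediate: by the case analysis preceding the theorem, a nontrivial point of $NP(2n)$ with $2\le n\le 2j$ forces some $A_k$ ($k\le j$) to fail injectivity, hence to have a critical point, contradicting that $A_k(t,\cdot)$ is a diffeomorphism for $t>t_j$; and by the earlier Remark the absence of $NP(2n)$ rules out an invariant curve of period $2n$ around an elliptic diameter.
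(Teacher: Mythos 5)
Your proposal is correct and follows essentially the same route as the paper: the paper's own argument for the Theorem is precisely the induction on the derivative recursion for $A'_{m}$ (using $l_m\to 2$, $x_m\to 1$, $\beta_m\to 0$ to get $A'_{m+1}>0$, hence near $1$, for large $t$), and the Corollary is then obtained, as you do, from the case analysis showing that a point of $NP(2n)$ forces a critical point of some $A_k$, together with the Remark that a resonant invariant curve around the elliptic diameter must meet $\Gamma$ in $NP(2n)$. Your explicit degree argument upgrading $A'_m>0$ to ``$A_m$ is a diffeomorphism'' is a detail the paper leaves implicit, but it is the intended reasoning.
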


{\it The curvature flow destroys the resonant invariant curve of type $2n$ surrounding an elliptic period-two orbit.}
Observe that an invariant curve of period $2n$ surrounding an elliptic period-two orbit must intersect $\Gamma$ in  $NP(2n)$. Therefore, there is no resonant curve of period $2n$ around an eliptical diameter.

 The  above result applies to the case of an ellipse  with small excentricity. All the above envelopes as strictly contained in the convex region $U$, so $NP(2n)=\emptyset$.

\begin{proposition}
 Elipses with small eccentricity, the normal periodic orbits which are not diameters have large periods.
\end{proposition}



\begin{proposition}
Let $\Gamma=\{(\theta,\frac{\pi}{2}\}$, the set of initial conditions perpendicular to the boundary curve $\gamma$.
If $B^{n}(\Gamma)$ is a graph for every $n>0$ then $X$ has constant width.
\end{proposition}

Since there is a time $t_j>0$ such that for all $t>t_j$ the map $A_n$ is a circle diffeomorphism with fixed points (the diameters), the exist not a periodic orbit of higher period. Thus  $NP(2n)=\emptyset$ . 

Observe that an invariant curve of period $2n$ surrounding an elliptic period-two orbit must intersect $\Gamma$. Therefore it must contain a point of $NP(2n)$. A contradiction.

{\it The curvature flow destroys the resonant invariant curve of type $2n$ surrounding an elliptic period-two orbit.}

 The  above result applies to the case of an ellipse  with small excentricity ($a$ close to 1). All the above envelopes as strictly contained in the convex region $U$, so $NP(2n)=\emptyset$.

\begin{proposition}
 Elipses with small eccentricity, the normal periodic orbits which are not diameters have large periods.
\end{proposition}



\begin{proposition}
Let $\Gamma=\{(\theta,\frac{\pi}{2}\}$, the set of initial conditions perpendicular to the boundary curve $\gamma$.
If $B^{n}(\Gamma)$ is a graph for every $n>0$ then $X$ has constant width.
\end{proposition}

\subsection{ Destruction of non-convex caustic of the of period four of ellipses}

In this Section, we prove that the perturbation of an ellipse by the curvature flow destroys the invariant curve of period four around the smallest diameter(period two elliptic orbit).
The argument used is elementary and direct. We show that the normal periodic orbit of period four becomes hyperbolic.  

As a consequence, there is no invariant curve of period four around the elliptic period two orbit. So, it follows, by a Therorem of S. Agenent \cite{Ang}, that {\it the topological entropy of the billiard map is positive}.
So the deformation by the curvature flow of the Billiard map of the ellipse produces a Birkhoff instability, with a non-trivial hyperbolic set around the elliptic diameter.

In the next Section, using a general argument, similar to \cite{D-DC-RR}, we show that, the curvature flor destroys every resonant invariant curve around the elliptic diameter, thus, producing many instability regions.


\begin{example}
 An elementary computation shows that the invariant curves corresponding to period four orbits around the elliptic diameter do not persist under the curvature flow.
Any homotopically trivial invariant curve around the diameter $\sigma_{0}$  contains a normal periodic point of period four $NP(4)$. 

We show that under the curvature flow pertubation, any $NP(4)$ is {\it hyperbolic}
\end{example}


The above argument can be replaced by using the generating function of $B^{2}$. This is what is done in the next Section.For that, we first prove the following Lemma  on the "conjunction" of two generating functions see(\cite{mather1}):

\begin{lemma}
Let  $h_{1}$ and $h_{2}$ be generating functions of two exacts area preserving twist maps $F_{1}$ and $F_{2}$ respectively.
Suppose that,$\frac{{\partial}^2}{\partial x_{1}^2}h_{1}(x_{0},x_{1})+ \frac{{\partial}^2}{\partial x_{1}^2}h_{2}(x_{1},x_{2})\neq 0$.
Then the composition $F_{2} \circ F_{1}$ is generated by $h(x_{0},x_{2})= h_{1}(x_{0},x_{1})+h_{2}(x_{1},x_{2})$, restricted to the set
$\frac{\partial}{\partial x_{1}}h_{1}(x_{0},x_{1})+ \frac{\partial}{\partial x_{1}}h_{2}(x_{1},x_{2})=0$.
\end{lemma}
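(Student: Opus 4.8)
The plan is to run the standard variational (``broken geodesic'') argument for composing twist maps, organised around one structural observation: the constraint in the statement is exactly the condition that the intermediate momenta match. Recall the generating-function convention used for billiards in this paper, $-\partial_{s}L=\cos\phi$ and $\partial_{s'}L=\cos\phi_{1}$: a generating function $h(x,x')$ of an exact area-preserving twist map $F$ encodes $F(x,p)=(x',p')$ through $p=-\partial_{x}h(x,x')$ and $p'=\partial_{x'}h(x,x')$. Writing $F_{1}(x_{0},p_{0})=(x_{1},p_{1})$ and $F_{2}(x_{1},\tilde p_{1})=(x_{2},p_{2})$, these relations read
\[
p_{0}=-\tfrac{\partial h_{1}}{\partial x_{0}}(x_{0},x_{1}),\quad p_{1}=\tfrac{\partial h_{1}}{\partial x_{1}}(x_{0},x_{1}),\quad \tilde p_{1}=-\tfrac{\partial h_{2}}{\partial x_{1}}(x_{1},x_{2}),\quad p_{2}=\tfrac{\partial h_{2}}{\partial x_{2}}(x_{1},x_{2}).
\]
The composition $F_{2}\circ F_{1}$ is defined only when the momentum $p_{1}$ produced by $F_{1}$ at the intermediate point equals the momentum $\tilde p_{1}$ consumed there by $F_{2}$, i.e.\ $p_{1}=\tilde p_{1}$, which is precisely $\frac{\partial h_{1}}{\partial x_{1}}(x_{0},x_{1})+\frac{\partial h_{2}}{\partial x_{1}}(x_{1},x_{2})=0$. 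This identifies the constraint set as the momentum-matching locus.

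First I would solve this constraint for $x_{1}$. Setting $G(x_{0},x_{1},x_{2})=\frac{\partial h_{1}}{\partial x_{1}}+\frac{\partial h_{2}}{\partial x_{1}}$, the hypothesis $\frac{\partial^{2}h_{1}}{\partial x_{1}^{2}}+\frac{\partial^{2}h_{2}}{\partial x_{1}^{2}}\neq 0$ is exactly $\frac{\partial G}{\partial x_{1}}\neq 0$, so the implicit function theorem yields a smooth function $x_{1}=\xi(x_{0},x_{2})$ with $G(x_{0},\xi,x_{2})\equiv 0$ near the reference point. I then set $h(x_{0},x_{2})=h_{1}(x_{0},\xi(x_{0},x_{2}))+h_{2}(\xi(x_{0},x_{2}),x_{2})$, which is the restricted sum from the statement with the intermediate variable eliminated.

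The core of the argument is then an envelope cancellation. Differentiating $h$ in $x_{0}$ and in $x_{2}$ and grouping the terms carrying $\partial\xi/\partial x_{0}$ and $\partial\xi/\partial x_{2}$, each such group factors through $G(x_{0},\xi,x_{2})$, which vanishes on the constraint. Hence
\[
\frac{\partial h}{\partial x_{0}}=\frac{\partial h_{1}}{\partial x_{0}}(x_{0},\xi)=-p_{0},\qquad \frac{\partial h}{\partial x_{2}}=\frac{\partial h_{2}}{\partial x_{2}}(\xi,x_{2})=p_{2},
\]
which are exactly the generating-function relations $p_{0}=-\partial_{x_{0}}h$ and $p_{2}=\partial_{x_{2}}h$ for the map $(x_{0},p_{0})\mapsto(x_{2},p_{2})=F_{2}\circ F_{1}(x_{0},p_{0})$. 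Thus $h$ generates the composition, and orbit segments of $F_{2}\circ F_{1}$ correspond to critical points of $x_{1}\mapsto h_{1}(x_{0},x_{1})+h_{2}(x_{1},x_{2})$.

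The envelope identity is mechanically routine; the step I expect to require the most care is the conceptual bookkeeping rather than the computation. One must resist assuming $h$ is itself a monotone twist generating function, since a composition of twist maps need not be twist; the lemma asserts only that $h$ reproduces $F_{2}\circ F_{1}$ through the two first-order relations, and the nondegeneracy hypothesis is precisely what makes $\xi$, and hence $h$, well defined and smooth on a neighbourhood. Exactness and area-preservation of $F_{2}\circ F_{1}$ are inherited for free from $F_{1}$ and $F_{2}$, so no separate verification is needed there. I would close by noting that this is the mechanism behind using the generating function of $B^{2}$ in the next section: it reduces the analysis of the period-four normal orbits to the critical points of a single action $h_{1}+h_{2}$.
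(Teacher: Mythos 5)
Your proof is correct and follows essentially the same route as the paper's: apply the implicit function theorem to the momentum-matching constraint (using the stated nondegeneracy to solve for $x_{1}$), then differentiate the restricted sum in $x_{0}$ and $x_{2}$ and observe that the terms carrying $\partial x_{1}/\partial x_{0}$ and $\partial x_{1}/\partial x_{2}$ cancel on the constraint set, leaving exactly the generating relations $-\partial_{x_{0}}h=y_{0}$ and $\partial_{x_{2}}h=y_{2}$. The additional remarks on momentum matching and on not assuming the composition is twist are sensible framing but do not change the argument.
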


\begin{proof}

By the Implicit Function Theorem, $\frac{\partial}{\partial x_{1}}h_{1}(x_{0},x_{1})+ \frac{\partial}{\partial x_{1}}h_{2}(x_{1},x_{2})=0$ defines a surface $x_{1}=g(x_{0},x_{2})$ if it is non-empty and $\frac{{\partial}^2}{\partial x_{1}^2}h_{2}(x_{0},x_{1})+\frac{{\partial}^2}{\partial x_1{}^2}h_{2}(x_{1},x_{2}) \neq 0$.

If this is the case, then 
$-\frac{\partial}{\partial x_{0}}h(x_{0},x_{2})=-[\frac{\partial}{\partial x_{0}}h_{1}(x_{0},x_{1})+\frac{\partial}{\partial x_{1}}h_{1}(x_{0},x_{1})\frac{\partial x_{1}}{x_{0}}
+\frac{\partial}{\partial x_{1}}h_{2}(x_{1},x_{2})\frac{\partial x_{1}}{x_{0}}]$
$=-\frac{\partial}{\partial x_{0}}h_{1}(x_{0},x_{1})=-y_{0}$, using  the restriction condition.

This condition also gives

$\frac{\partial}{\partial x_{2}}h(x_{0},x_{2})=\frac{\partial}{\partial x_{1}}h_{1}(x_{0},x_{1})\frac{\partial x_{1}}{x_{2}}+\frac{\partial}{\partial x_{1}}h_{2}(x_{1},x_{2})\frac{\partial x_{1}}{x_{2}}+\frac{\partial}{\partial x_{2}}h_{2}(x_{1},x_{2})=y_{2}$

\end{proof}

{\bf Remark}: For the billiard map the generating function is $L(s,s')=\Vert X(s')-X(s) \Vert$ we have 
$\frac{{\partial}^2 L}{\partial {s'}^2}(s,s')+\frac{{\partial}^2 L}{\partial {s''}^2}(s',s'')=$
$2{\sin}(\phi)[{\sin}(\phi)(\frac{1}{L(s,s')}+\frac{1}{L(s',s'')})+k(s')]>0$,
where $\cos(\phi)=\langle\frac{X(s')-X(s)}{L(s,s')},X'(s')\rangle=\langle\frac{X(s'')-X(s')}{L(s',s'')},X'(s')\rangle$

So the generating function of $B^{2}$ is 

$h(s,s'')=L(s,s')+L(s',s'')$ restrited to $\frac{\partial }{\partial s'}[L(s,s')+L(s',s'')]=0$.

\subsection{Destruction of invariant curves around the elliptic diameter}

In this Section we show that one may adapt  the arguments  used in \cite{D-DC-RR}  to show the following:

\begin{theorem}
The deformation of an ellipse along the curvature flow destroys all periodic  invariant curves of the associated billiard map around the elliptic diameter (Non-convex Ressonant Caustics).
\end{theorem}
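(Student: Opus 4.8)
The plan is to adapt the Poincar\'e--Melnikov scheme for exact area-preserving twist maps, exactly as in \cite{D-DC-RR}, treating the curvature flow as a perturbation of the integrable elliptic billiard with the normalized time $t$ as perturbation parameter. First I would fix the unperturbed picture: the billiard on the ellipse $X(0,\cdot)$ is integrable, its phase space is foliated by caustics, and the invariant curves surrounding the elliptic diameter correspond to the confocal hyperbolas (the non-convex, i.e. resonant hyperbolic, caustics). For each rational rotation number $m/2n$ there is a resonant hyperbolic caustic carrying a whole circle of $2n$-periodic Poncelet orbits, and by the string (Poncelet) property the perimeter $L_{2n}$ of the inscribed periodic polygon is \emph{constant} along this circle. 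This is the completely resonant situation in which the Melnikov method applies, and it is encoded by the generating function of $B^{2n}$ obtained by iterating the conjunction Lemma above: the $2n$-periodic orbits are exactly the critical points of the action $\sum_j L(\theta_j,\theta_{j+1})$ restricted to the appropriate critical set, and on the caustic this action is critical with the same critical value throughout the circle.

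Next I would introduce the subharmonic Melnikov potential. Writing the perturbed curve as $X(t,\cdot)$ with $X_t(0,\theta)=[h'-k']T+[k-h]N$, the first variation of the perimeter of the periodic polygon parametrized by the phase $\theta$ along the resonant circle is, after discarding the tangential contributions because the unperturbed polygon is already critical for length,
\[
M_{m,n}(\theta)=\frac{\partial}{\partial t}\Big|_{t=0}L_{2n}(\theta;t)=\sum_{j=1}^{2n}2\sin\phi_j\,\big\langle N(\theta_j),X_t(0,\theta_j)\big\rangle=\sum_{j=1}^{2n}2\sin\phi_j\,\big(k(\theta_j)-h(\theta_j)\big),
\]
where $\theta_1,\dots,\theta_{2n}$ are the vertices of the orbit issued from $\theta$ and $\phi_j$ the reflection angles (the overall sign depends only on the orientation convention for $N$ and is irrelevant). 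By the standard persistence criterion --- nondegenerate critical points of $M_{m,n}$ continue to $2n$-periodic orbits of $B_t$, and the resonant curve can survive to first order only if $M_{m,n}$ is constant --- the theorem reduces to the single assertion that, for every resonance $m/2n$, the function $M_{m,n}$ is \emph{not constant} in $\theta$.

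The heart of the proof, and the main obstacle, is therefore to establish this non-constancy for all hyperbolic resonances at once. Here I would use the explicit geometry of the ellipse: $h(\theta)=\sqrt{a^2\cos^2\theta+b^2\sin^2\theta}$ and $R=h+h''$ are even, $\pi$-periodic functions whose Fourier expansions in $\cos 2k\theta$ are genuinely non-trivial, degenerating to constants only when $a=b$ (the circle, for which $k-h\equiv 0$ and nothing is destroyed, consistent with the statement). Exploiting the reflection symmetries of the ellipse together with the elliptic-function (Poncelet) parametrization of the resonant polygons, I would rewrite $M_{m,n}(\theta)$ as an explicit trigonometric sum and reduce its non-constancy to the non-vanishing of one distinguished Fourier coefficient, namely the projection of the normal velocity $k-h$ onto the resonant mode selected by the $2n$-gon. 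The delicate point is to rule out an accidental global cancellation in the weighted sum $\sum_j 2\sin\phi_j\,(k-h)(\theta_j)$ uniformly in $n$; I expect to control this either by a small-eccentricity expansion, where the leading resonant coefficient is manifestly nonzero and analytic dependence on the parameter propagates the conclusion, or by a direct residue computation in the elliptic coordinate. Once $M_{m,n}$ is shown to be non-constant, its isolated critical points produce, for small $t>0$, a Birkhoff chain of alternating hyperbolic and elliptic $2n$-periodic points, so the resonant hyperbolic caustic cannot persist; as this holds for every $m/2n$, the curvature flow breaks all the non-convex resonant caustics around the elliptic diameter.
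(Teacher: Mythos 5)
Your overall strategy coincides with the paper's: treat the curvature flow as a first-order perturbation of the integrable elliptic billiard, form the subharmonic Melnikov potential over the resonant circle of $2n$-periodic orbits tangent to a confocal hyperbola, and reduce the theorem to the non-constancy of that potential for every resonance $m/2n$. Your first-variation formula $M_{m,n}(\theta)=\sum_j 2\sin\phi_j\,(k(\theta_j)-h(\theta_j))$ is the correct general expression, and your reduction to "$M_{m,n}$ not constant" is the right persistence criterion (modulo the point that the curves in question are invariant for $B^2$, so one must work with the conjunction of two generating functions and action--angle coordinates near the elliptic diameter to conclude, as the paper does in its final step).

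However, there is a genuine gap exactly where you say the heart of the proof lies: you do not actually prove non-constancy, you only name two candidate strategies, and neither is the mechanism that makes the argument close. The difficulty with your form of the potential is that the weights $2\sin\phi_j$ vary along the orbit, so "the projection onto the resonant Fourier mode" is not a computation you can do directly, and ruling out cancellation uniformly in $n$ and in the eccentricity is precisely the hard part. The paper circumvents this in two moves you are missing. First, writing the perturbation in confocal elliptic coordinates, $X_1(\varphi)=a\mu_1(\varphi)D^{-2}X_0(\varphi)$ with $\mu_1(\varphi)=-ab\,(a^2\cos^2\varphi+\sin^2\varphi)^{-2}$, and invoking Lemma 10 of \cite{SP} ($a\langle p_{j-1}-p_j,D^{-2}X_0(\varphi_j)\rangle=2\lambda$ on the caustic $C_\lambda$), which collapses the weighted sum to the unweighted sum $2\lambda\sum_j\mu_1(\varphi_j)$. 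Second, reparametrizing by the incomplete elliptic integral $t=F(\varphi,k(\lambda))$ so that the dynamics on the resonant caustic becomes the rigid shift $t\mapsto t+\delta(\lambda)$; then $\tilde\mu_1$ extends to $\mathbb{C}$ as an elliptic function of order four with double poles, and a finite sum of its translates $\sum_j\tilde\mu_1(t+2j\delta)$ (together with the terms involving the analytic midpoint map $\Psi_0$) cannot be constant because the leading Laurent coefficient $\alpha_2\neq 0$ forces genuine poles. Your fallback of a small-eccentricity expansion would at best give the result for nearly circular ellipses, and propagating it by analyticity in the eccentricity is itself nontrivial since the caustic parameter $\lambda=\lambda(m/2n,e)$ and the orbit configuration move with the eccentricity; the paper's residue/pole argument is what delivers the statement for all ellipses and all resonances at once.
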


\begin{proof}
We start by considering the Melnikov potential applied to the generating function correponding to $B^2$. 

We are now interested in trajectories tangent to {\it Non-convex  Caustics} 

$$\frac{x^2}{a^2-{\lambda}^2}+\frac{y^2}{b^2-{\lambda}^2}=1$$ with $b<\lambda <a$.

Write, as above,  the deformed ellipse by the curvature flow as $X(\epsilon,\varphi)=(\cosh(\mu_{\epsilon}(\varphi) \cos(\varphi),\sinh(\mu_{\epsilon}(\varphi) \sin(\varphi))$ with
$\mu_{\epsilon}(\varphi)=\mu_{0}+\mu_{1}(\varphi)+ O({\epsilon}^2)$, $\cosh(\mu_{0})=a$,$\sinh(\mu_{0})=b$ and 

$$\mu_{1}(\varphi)=\frac{-ab}{{(a^{2} {\cos}^{2}(\varphi)+ {\sin}^{2}(\varphi))}^{2}}$$

Let us consider the {\it Action} 

$$W_{\epsilon}^{p,2q}(\varphi_{0},\varphi_{2},\cdots,\varphi_{2q})= \sum_{j=0}^{2q-1} L_{\epsilon}(\varphi_{j},\varphi_{j+1})$$

restricted to

$\frac{\partial }{\partial \varphi_{2j+1}}[L_{\epsilon}(\varphi_{2j+1},\varphi_{2j})+L_{\epsilon}(\varphi_{2j+2},\varphi_{2j+1})]=0$, for $j=0,\cdots,n-1$.

By writing $L_{\epsilon}=L_{0}+\epsilon L_{1}+ O({\epsilon}^2)$, and  the generating function for $B_{\epsilon}^{2}$ as $h_{\epsilon}=h_{0}+\epsilon h_{1}+ O({\epsilon}^2)$ we calculate
$h_{1}(\varphi,\varphi'')=L_{1}(\varphi,\varphi')+L_{1}(\varphi',\varphi'')$ restricted to $\frac{\partial }{\partial \varphi'}[L_{0}(\varphi,\varphi ')+L_{0}(\varphi',\varphi'')]=0$.

 This last equation defines  a smooth function $\varphi'=\Psi(\varphi, \varphi'')$, hence,we write: 

$$h_{1}(\varphi,\varphi'')=L_{1}(\varphi,\Psi(\varphi, \varphi''))+L_{1}(\Psi(\varphi, \varphi''),\varphi'')$$
To obtain the $(p,2q)$- subhamonic Melnikov potential we write:

Recall that $L_{1}(\varphi,\varphi')=\langle \frac{X_{0}\varphi')-X_{0}(\varphi)}{L(\varphi,\varphi')},X_{1}(\varphi')-X_{1}(\varphi)\rangle $
where the deformed curve is $X(\epsilon,.)=X_{0}+\epsilon X_{1}+O({\epsilon}^2)$, so $X_{1}(\varphi)=a\mu_{1}(\varphi)D^{-2}X_{0}(\varphi)$.

$$W_{\epsilon}^{p,2}(\varphi_{0},\varphi_{2},\cdots,\varphi_{2q})=W_{0}^{p,2q} +\epsilon W_{1}^{p,2q} + O({\epsilon}^2) $$ 
with $$W_{1}^{p,2q}=\sum_{j=0}^{2q-1} h_{1}(\varphi_{2j},\varphi_{2j+2})=\sum_{j=0}^{2q-1} L_{1}(\varphi_{j},\varphi_{j+1})$$
restrited to $\frac{\partial }{\partial \varphi_{2j+1}}[L_{0}(\varphi_{2j+1},\varphi_{2j})+L_{0}(\varphi_{2j+2},\varphi_{2j+1})]=0$, for $j=0,\cdots,q-1$.

We are ready to apply the  results of Ramires-Ros et. al : 

{\bf Claim 1} Lemma 10 of \cite{SP}

Let $p_{j}=\frac{X_{0}(\varphi')-X_{0}(\varphi)}{L_{0}(\varphi,\varphi')}$ and $C_{\lambda}$ be the Caustic (hyperbole) tangent to the billiard trajectory inside the elipse determined by $(X_{0}(\phi_{j}))$. Then

$$a\langle p_{j-1}-p_{j}, D^{-2}X_{0}(\phi_{j})\rangle=2\lambda$$
where $D=diag(a,1)$, the diagonal matrix.

{\bf Claim 2} The $(p,2q)$-subharmonic Melnikov  potential is: $$\sum_{j=0}^{2q-1} L_{1}(\varphi_{j},\varphi_{j+1})=2\lambda\sum_{j=0}^{2q-1}\mu_{1}(\phi_{j}) $$
restricted to $\frac{\partial }{\partial \varphi_{2j+1}}[L_{0}(\varphi_{2j+1},\varphi_{2j})+L_{0}(\varphi_{2j+2},\varphi_{2j+1})]=0$, for $j=0,\cdots,q-1$.

{\it Proof}

Using the above notation we write $\varphi_{2j+1}=\Psi_{\epsilon}(\varphi_{2j}, \varphi_{2j+2})$,
$$h_{1}(\varphi_{2j},\varphi_{2j+2})=L_{1}(\varphi_{2j},\Psi_{0}(\varphi_{2j}, \varphi_{2j+2}))+L_{1}(\Psi_{0}(\varphi_{2j}, \varphi_{2j+2}),\varphi_{2j+2})$$

But $L_{1}(\varphi_{j},\varphi_{j+1})=\langle p_{j},X_{1}(\varphi_{j+1})-X_{1}(\varphi_{j})\rangle=$

$=ab\langle p_{j},\mu_{1}(\varphi_{j+1})D^{-2}X_{0}(\varphi_{j+1})-\mu_{1}(\varphi_{j})D^{-2}X_{0}(\varphi_{j})\rangle$, so proceeding as in \cite{SP} 





$$\sum_{j=0}^{q-1}h_{1}(\varphi_{2j},\varphi_{2j+2})=$$
$$\sum_{j=0}^{q-1} L_{1}(\varphi_{2j},\Psi_{0}(\varphi_{2j}, \varphi_{2j+2})+L_{1}(\Psi_{0}(\varphi_{2j}, \varphi_{2j+2}),\varphi_{2j+2})=$$
$$2\lambda[\sum_{j=0}^{q-1}\mu_{1}(\varphi_{2j})+\sum_{j=0}^{q-1}\mu_{1}(\Psi_{0}(\varphi_{2j},\varphi_{2j+2}))]$$

{\bf Claim 3} $ h_{1}$ is  not identically zero.

Proof: We now use \cite{D-DC-RR} and a time reparametrization so that the dynamics along an invariant  RIC of type  $(p,2q)$ becomes a shift $t \mapsto t+\delta(\lambda)$
For this, we calculate the period and modulus of the corresponding elliptic function associated to hyperbolic caustic, see \cite{C-F}

 We also use the fact that, for the ellipse,  $\Psi_{0}$ is an analytic function and we proceed by calculating  the Laurent expansion as in Lemma 5.1 of \cite{D-DC-RR}.

Here are  some details:

Let $k^{2}(\lambda)=\frac{a^{2}- \lambda^{2}}{a^{2}-b^{2}}$, with $b<\lambda<a$, the defining parameter of the hyperbolic caustic. The caustic corresponds to a periodic orbit of type 
$\frac{p}{2q} \in \mathbb{Q}$ if and only if $\sqrt{1-\frac{1}{\lambda}}=cn(\frac{p}{q}K(\lambda))$, with $K(k)=\int_{0}^{\frac{\pi}{2}} \frac{du}{\sqrt{1-k^{2}{\sin}^{2}(u)}}$,
$\sin(\frac{\zeta}{2})=\lambda$.

  $k=k(\lambda)$ is the {\it modulus} and $\delta(\lambda)=2\int_{0}^{\frac{\zeta}{2}}\frac{du}{\sqrt{1-k^{2}{\sin}^{2}(u)}}$  is the  associated {\it period}.

 $t=F(\varphi,k)= \int_{0}^{\varphi} \frac{1}{\sqrt{1-k^{2} {\sin}^{2} u}} du$ defines a time reparametrization such that the dynamics along an orbit associated to $C_{\lambda}$ becomes a shift
$t \mapsto t+\delta(\lambda)$.

In Lemma 5.1 of \cite{D-DC-RR} it is been proven that the  function
$$\tilde{\mu}_{1}(t)=\frac{-ab}{{(a^{2} {cn}^{2}(t)+ b^2{sn}^{2}(t))}^{2}}$$

can be extended to ${\mathbb C}$ as an elliptic function of order four with  poles of order two at

$T_{\pm}=[\pm \frac{\zeta}{2} +K'i]+2K{\mathbb Z}+2K'{\mathbb Z},\zeta=2K-\delta$.

And Laurentz expansion
$$\tilde{\mu}_{1}(t_{\pm}+\tau)=\frac{\alpha_{2}}{{\tau}^{2}}\pm \frac{\alpha_{1}}{\tau}+O(1), \tau \rightarrow 0$$

The fact that $\alpha_{2} \neq 0$ implies, as in Proposition 5.3 of \cite{D-DC-RR} that Melnikov potential is not constant.

To prove this we write the sub-harmonic potential in the variable $t$ as

$${\tilde L}_{1}(t,\delta)=2\lambda[\sum_{j=0}^{q-1}\mu_{1}(t+ 2j \delta)+\sum_{j=0}^{q-1}\mu_{1}(\Psi_{0}(t+2j \delta,t+(2j+2)\delta)]$$.

Recall that, in our case, the twist condition implies that the function $\Psi_{0}$ is {\it analytic} with $t \mapsto \Psi_{0}(t+2j \delta,t+(2j+2)\delta)$ a diffeomorphism.

Therefore, according to Proposition 5.3, the  poles of the complex extension of $(t,\delta)$ are defined by:
either $t+2j \delta \in {\cal T}$ or $\Psi_{0}(t+2j \delta,t+(2j+2)\delta) \in {\cal T}$.


It remains to show that the non-vanishing of the potential implies the breaking of the invariant curves. For that, we use the action-angle coordinates $(x,y)$, in a neighborhood of the elliptic diameter,  to write the invariant curve as a graph $(x,v(x))$ and apply \cite{SP}. 

Let us denote the generating function of this change of coordinates $\Psi$ as $g(x_{0},x_{1})$,which is independent of $\epsilon$.

The composition ${\Phi}^{-1} \circ B_{\epsilon}^2 \circ \Phi$ is generated by $G(x,x_{1},\epsilon)=-g(\varphi_{2},x_{1})+L_{\epsilon}(\varphi,\varphi_{2})+g(x,\varphi)$ restricted to
${\partial}_{1} L_{\epsilon}(\varphi,\varphi_{2})+{\partial}_{2} g(x,\varphi)=0$, defining $\varphi_{\epsilon}=\varphi(x,\varphi_{2},\epsilon)$ and 
$-{\partial}_{1} g(\varphi_{2},x_{1})+{\partial}_{2} L_{\epsilon}(\varphi_{\epsilon},\varphi_{2})=0$, which defines $\varphi_{2}=\varphi_{2}(x,x_{1},\epsilon)$.


Therefore

 $G(x,x_{1},\epsilon)=-g(\varphi_{2}(x,x_{1},\epsilon),x_{1})+L_{\epsilon}(\varphi(x,\varphi_{2}(x,x_{1},\epsilon),\varphi_{2}(x,x_{1},\epsilon)))+g(x,\varphi(x,\varphi_{2}(x,x_{1},\epsilon))$

Writing $G(x,x_{1},\epsilon)=G_{0}(x,x_{1})+ \epsilon G_{1}(x,x_{1})+ O({\epsilon}^2)$ with $G_{1}(x,x_{1})=\frac{\partial}{\partial \epsilon}|_{\epsilon=0}L_{\epsilon}(\varphi,\varphi_{0})=h_{1}(\varphi.\varphi_{2})$.

 This gives Claim 2 and finishes the proof of Theorem.
\end{proof}


Josu\'e Damasceno-UFOP-Minas Gerais-Brazil: josue@ufop.edu.br

M\'{a}rio J. Dias Carneiro- UFMG- Minas Gerais - Brazil: carneiro@mat.ufmg.br

Carlos Salazar -CEFET-Minas Gerais-Brazil- carlosleo17@hotmail.com

\end{document}